\newtheorem*{ThmA}{Theorem~A}
\newtheorem*{ThmB}{Theorem~B}
\newtheorem*{ConC}{Conjecture~C}
\newtheorem{Thm}{Theorem} 
\newaliascnt{Lem}{Thm}
\newtheorem{Lem}[Lem]{Lemma}
\newaliascnt{Prop}{Thm}
\newtheorem{Prop}[Prop]{Proposition}
\newaliascnt{Cor}{Thm}
\newtheorem{Cor}[Cor]{Corollary}
\numberwithin{equation}{section}
\renewcommand{\phi}{\varphi}
\newcommand{\C}{\mathrm{C}}
\newcommand{\N}{\mathrm{N}}
\newcommand{\Z}{\mathrm{Z}}
\newcommand{\ZZ}{\mathbb{Z}}
\newcommand{\CC}{\mathbb{C}}
\newcommand{\QQ}{\mathbb{Q}}
\newcommand{\NN}{\mathbb{N}}
\newcommand{\SL}{\operatorname{SL}}
\newcommand{\PSL}{\operatorname{PSL}}
\newcommand{\Irr}{\operatorname{Irr}}
\newcommand{\Gal}{\operatorname{Gal}}
\newcommand{\lcm}{\operatorname{lcm}}
\newcommand{\Tr}{\operatorname{Tr}}
\mathchardef\ordinarycolon\mathcode`\:  
\title{Orders generated by character values}
\author{Andreas Bächle\footnote{Vakgroep Wiskunde, Vrije Universiteit Brussel, Pleinlaan 2, 1050 Brussels, Belgium, \href{mailto:andreas.bachle@vub.be}{andreas.bachle@vub.be}} \ and Benjamin Sambale\footnote{Institut für Mathematik, Friedrich-Schiller-Universität Jena, 07737 Jena, Germany, 
\href{mailto:benjamin.sambale@uni-jena.de}{benjamin.sambale@uni-jena.de}}}
\date{\today}
\begin{document}
\frenchspacing
\maketitle
\begin{abstract}\noindent
Let $K:=\QQ(G)$ be the number field generated by the complex character values of a finite group $G$. Let $\ZZ_K$ be the ring of integers of $K$. In this paper we investigate the suborder $\ZZ[G]$ of $\ZZ_K$ generated by the character values of $G$. We prove that every prime divisor of the order of the finite abelian group $\ZZ_K/\ZZ[G]$ divides $|G|$. Moreover, if $G$ is nilpotent, we show that the exponent of $\ZZ_K/\ZZ[G]$ is a proper divisor of $|G|$ unless $G=1$. We conjecture that this holds for arbitrary finite groups $G$.
\end{abstract}

\textbf{Keywords:} finite groups, field of character values, orders, algebraic integers\\
\textbf{AMS classification:} 20C15, 11R04

\section{Introduction}
It is well-known that the complex character values of a finite group $G$ are algebraic integers. We like to measure how “many” algebraic integers actually arise in this way. 
The field
\[K:=\QQ(G):=\QQ(\chi(g):\chi\in\Irr(G),\ g\in G)\subseteq\CC\] 
of character values of $G$ is contained in $\QQ_{\exp(G)}$ where $\exp(G)$ denotes the exponent of $G$ and $\QQ_n$ is the cyclotomic field generated by the complex $n$-th roots of unity. 
Let $\ZZ_K$ be the ring of integers of $K$. 
The character values of $G$ also generate an order $\ZZ[G]$ contained in $\ZZ_K$ (here $\ZZ[G]$ is neither the group algebra nor the ring of generalized characters). The deviation of $\ZZ[G]$ from $\ZZ_K$ can be measured by the structure of the finite abelian group $\ZZ_K/\ZZ[G]$. If $G$ is a rational group for instance, then $K=\QQ$ and $\ZZ[G]=\ZZ=\ZZ_K$. If $G$ is abelian, then $K=\QQ_{\exp(G)}$  and $\ZZ_K=\ZZ[e^{2\pi \sqrt{-1}/\exp(G)}]$. In this case it is easy to see that $\ZZ[G]=\ZZ_K$ as well.
On the other hand, we construct a group $G$ of order $240$ such that
\[ \ZZ_K/\ZZ[G]\cong C_{120}^2\times C_{60}^2\times C_{12}^4\times C_4^4\times C_2^{14}\]
where $C_n$ denotes a cyclic group of order $n$.
Nevertheless, our main theorems show that the structure of $\ZZ_K/\ZZ[G]$ is restricted by the order of $G$.

\begin{ThmA}
Let $G$ be a finite group and $K:=\QQ(G)$. Then the prime divisors of $|\ZZ_K/\ZZ[G]|$ divide $|G|$. 
\end{ThmA}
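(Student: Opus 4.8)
The plan is to establish the contrapositive in localized form: for every prime $p\nmid|G|$ one has $\ZZ[G]_{(p)}=(\ZZ_K)_{(p)}$, i.e. $p\nmid|\ZZ_K/\ZZ[G]|$. Since $\ZZ_K/\ZZ[G]$ is finite, it is enough to show that the reduction map $\ZZ[G]\to\ZZ_K/p\ZZ_K$ is surjective: from $\ZZ_K=\ZZ[G]+p\ZZ_K$ the finite group $Q:=\ZZ_K/\ZZ[G]$ satisfies $Q=pQ$, and a finite abelian group with $Q=pQ$ has trivial $p$-part. Fix such a $p$ and put $n:=\exp(G)$; as $n\mid|G|$ we also have $p\nmid n$, so $p$ is unramified in $\QQ_n\supseteq K$. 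Hence $\ZZ_K/p\ZZ_K\cong\prod_{\mathfrak p\mid p}\ZZ_K/\mathfrak p$ is a finite product of finite fields, a reduced (étale) $\FF_p$-algebra.

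For a reduced finite $\FF_p$-algebra, a subalgebra is the whole ring exactly when it separates the $\overline{\FF_p}$-points, i.e. the homomorphisms $\ZZ_K\to\overline{\FF_p}$ (a dimension count: the restriction map on $\Hom(-,\overline{\FF_p})$ is injective iff equality of dimensions holds). So I would reduce the theorem to a non-collapsing statement under reduction. To set it up, write $\Gamma:=\Gal(\QQ_n/\QQ)\cong(\ZZ/n\ZZ)^\times$, recall $\sigma_a(\chi(g))=\chi(g^a)$, and let $H\le\Gamma$ be the stabiliser of $K$; by $K=\QQ_n^H$ this means $c\in H$ iff $g^c$ is conjugate to $g$ for all $g\in G$. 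Fixing an embedding $\iota\colon\ZZ[\zeta_n]\to\overline{\FF_p}$ (bars below denote reduction via $\iota$), every point of $\Hom(\ZZ_K,\overline{\FF_p})$ is of the form $\iota\circ\sigma_a|_{\ZZ_K}$ for some $a\in\Gamma$, and two such points agree on the character values iff, with $c:=ab^{-1}$, one has $\overline{\chi(g^c)}=\overline{\chi(g)}$ for all $\chi\in\Irr(G)$, $g\in G$.

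Thus everything comes down to the following claim, which is the step I expect to be the main obstacle: for every $c\in\Gamma\setminus H$ there exist $\chi\in\Irr(G)$ and $g\in G$ with $\overline{\chi(g^c)}\neq\overline{\chi(g)}$. Granting it, the homomorphisms $\iota\circ\sigma_a|_{\ZZ_K}$ for $a$ ranging over $\Gamma/H$ are pairwise distinct (so, being $[K:\QQ]$ in number, they exhaust $\Hom(\ZZ_K,\overline{\FF_p})$) and, by the very same claim, are separated by the character values; hence the image of $\ZZ[G]$ generates the reduced algebra $\ZZ_K/p\ZZ_K$, as required.

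To prove the claim I would argue via column orthogonality. Since $c\notin H$ there is some $g$ with $g^c$ not conjugate to $g$; fix it. The second orthogonality relation gives $\sum_{\chi}\chi(g^c)\chi(g^{-1})=0$ and $\sum_{\chi}\chi(g)\chi(g^{-1})=|\C_G(g)|$, whose difference is the identity
\[\sum_{\chi\in\Irr(G)}\bigl(\chi(g^c)-\chi(g)\bigr)\chi(g^{-1})=-|\C_G(g)|,\]
an equality of elements of $\ZZ[G]$ (note $\overline{\chi(g)}=\chi(g^{-1})$ is again a character value). Reducing modulo $\mathfrak p$ via $\iota$ and using that $p\nmid|\C_G(g)|$ — which holds precisely because $p\nmid|G|$ — shows the right-hand side is nonzero in $\overline{\FF_p}$, so not every summand can vanish; hence some $\overline{\chi(g^c)-\chi(g)}\neq0$, producing the desired separating value. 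The delicate point is exactly that $p\nmid|\C_G(g)|$ prevents the orthogonality mass from collapsing modulo $p$, and this is where the hypothesis $p\nmid|G|$ enters decisively.
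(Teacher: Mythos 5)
Your proof is correct, and it takes a genuinely different route from the paper's. The paper establishes the effective containment $|G|^e\ZZ_K\subseteq\ZZ[G]$ (\autoref{cor}): for each $g\in G$ it induces a character of $\langle g\rangle$ whose value at $g$ is a Leopoldt trace generator of $\ZZ_{\QQ(g)}$, giving $\lvert\N_G(\langle g\rangle)/\langle g\rangle\rvert\ZZ_{\QQ(g)}\subseteq\ZZ[g]$ (\autoref{leopoldt}, \autoref{navarro}, \autoref{Qg}), and then assembles the column rings $\ZZ[g]$ inside $\ZZ[G]$ using discriminant divisibility and the compositum lemma (\autoref{disc1}, \autoref{disc2}). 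You instead work one prime at a time: for $p\nmid|G|$ the algebra $\ZZ_K/p\ZZ_K$ is \'etale because $p$ is unramified in $\QQ_{\exp(G)}$, its $\overline{\FF_p}$-points are the reductions $\iota\circ\sigma_a|_{\ZZ_K}$ indexed by $\Gal(\QQ_{\exp(G)}|\QQ)/\Gal(\QQ_{\exp(G)}|K)$, and the second orthogonality relation --- whose right-hand side $|\C_G(g)|$ is a unit mod $p$ precisely because $p\nmid|G|$ --- forces distinct points to differ already on character values; the subalgebra criterion for reduced finite $\FF_p$-algebras then yields $\ZZ_K=\ZZ[G]+p\ZZ_K$, hence $p\nmid|\ZZ_K/\ZZ[G]|$. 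Your approach dispenses with Leopoldt's theorem and all discriminant estimates, and it isolates exactly where the hypothesis $p\nmid|G|$ enters; what it gives up is the quantitative content: \autoref{Qg} and \autoref{disc2} are not just steps toward Theorem~A but are reused for Theorem~B and for the examples, whereas your soft separation argument produces no explicit annihilator of $\ZZ_K/\ZZ[G]$. Two small points you should spell out, though neither is a gap: the dimension-count criterion needs surjectivity of the restriction map $\Hom(\ZZ_K/p\ZZ_K,\overline{\FF_p})\to\Hom(B,\overline{\FF_p})$ for a subalgebra $B$, which follows from integrality (lying over); and the finiteness of $\ZZ_K/\ZZ[G]$, which both you and the paper presuppose, holds because $\QQ\cdot\ZZ[G]$ is a finite-dimensional $\QQ$-subalgebra of the field $K$, hence equals $K$, so $\ZZ[G]$ has full rank in $\ZZ_K$.
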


\begin{ThmB}
Let $G\ne 1$ be a nilpotent group and $K:=\QQ(G)$. Then the exponent of $\ZZ_K/\ZZ[G]$ is a proper divisor of $|G|$. In particular, $|G|\ZZ_K\subseteq\ZZ[G]$.
\end{ThmB}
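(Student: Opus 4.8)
The plan is to reduce everything to $p$-groups and then treat a single prime by arithmetic means. Write the nilpotent group as a direct product $G=P_1\times\dots\times P_r$ of its Sylow subgroups, with $|P_i|=p_i^{n_i}$. Since irreducible characters of a direct product are outer tensor products, every character value of $G$ is a product of character values of the factors; taking the trivial character on all but one factor shows $\ZZ[P_i]\subseteq\ZZ[G]$, and conversely every generator of $\ZZ[G]$ is such a product. Hence $\ZZ[G]=\ZZ[P_1]\cdots\ZZ[P_r]$ and $K=K_1\cdots K_r$ with $K_i:=\QQ(P_i)\subseteq\QQ_{p_i^{n_i}}$. Because the fields $\QQ_{p_i^{n_i}}$ have pairwise coprime discriminants (powers of distinct primes), the $K_i$ are linearly disjoint with coprime discriminants, so $\ZZ_K=\ZZ_{K_1}\otimes_\ZZ\dots\otimes_\ZZ\ZZ_{K_r}$ and, inside $\ZZ_K$, likewise $\ZZ[G]=\ZZ[P_1]\otimes_\ZZ\dots\otimes_\ZZ\ZZ[P_r]$.

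I would then compute the $p_j$-primary part of $\ZZ_K/\ZZ[G]$ by localizing at $p:=p_j$. For $i\ne j$ Theorem~A forces $\ZZ_{K_i,(p)}=\ZZ[P_i]_{(p)}$, and the common factor $R:=\bigotimes_{i\ne j}\ZZ_{K_i,(p)}$ is free of finite rank $d_j:=\prod_{i\ne j}[K_i:\QQ]\ge 1$ over $\ZZ_{(p)}$, hence flat. Tensoring the inclusion $\ZZ[P_j]_{(p)}\hookrightarrow\ZZ_{K_j,(p)}$ with $R$ therefore gives
\[(\ZZ_K/\ZZ[G])_{(p)}\cong R\otimes_{\ZZ_{(p)}}\bigl(\ZZ_{K_j}/\ZZ[P_j]\bigr)\cong\bigl(\ZZ_{K_j}/\ZZ[P_j]\bigr)^{d_j},\]
using that $\ZZ_{K_j}/\ZZ[P_j]$ is a $p_j$-group by Theorem~A. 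Its exponent thus equals $\exp(\ZZ_{K_j}/\ZZ[P_j])$, and since these are powers of distinct primes I obtain the multiplicative formula $\exp(\ZZ_K/\ZZ[G])=\prod_j\exp(\ZZ_{K_j}/\ZZ[P_j])$. It therefore suffices to prove, for a non-trivial $p$-group $P$ of order $p^n$, that $\exp(\ZZ_K/\ZZ[P])$ divides $p^{n-1}$.

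Here a clean case distinction removes the subtle factor of $p$. If $P$ is cyclic it is abelian, so $\ZZ[P]=\ZZ_K$ and the exponent is $1\mid p^{n-1}$. If $P$ is non-cyclic, it has no element of order $p^n$, so $\exp(P)=p^m$ with $m\le n-1$ and $K\subseteq\QQ_{p^m}$. Thus it is enough to prove the divisibility $\exp(\ZZ_K/\ZZ[P])\mid\exp(P)$, i.e.\ $\exp(P)\,\ZZ_K\subseteq\ZZ[P]$, after which $m\le n-1$ delivers the proper divisibility. For this I would use induced characters. For $g\in P$ of order $p^m$ set $C:=\langle g\rangle$ and let $\lambda\in\Irr(C)$ be faithful; writing $W:=\N_P(C)/\C_P(C)\le\Aut(C)=(\ZZ/p^m\ZZ)^\times$, a direct evaluation of the induced character gives, for $j$ coprime to $p$,
\[(\lambda^P)(g^j)=[\C_P(C):C]\sum_{w\in W}\zeta_{p^m}^{jw}=[\C_P(C):C]\,\Tr_{\QQ_{p^m}/\QQ_{p^m}^{W}}(\zeta_{p^m}^{\,j}).\]
As $\lambda^P$ is an ordinary character, all its values lie in $\ZZ[P]$; moreover $\QQ_{p^m}^{W}=\QQ(\chi(g^j):\chi\in\Irr(P),\,j)$, because two powers of $g$ agree on all characters exactly when they are $P$-conjugate, i.e.\ related by $W$.

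Assembling these Gaussian periods over the various cyclic subgroups produces a full submodule of $\ZZ_K$ lying in $\ZZ[P]$, and the only denominators that occur are $p$-powers: the centralizer indices $[\C_P(C):C]$ and the index of the period lattice $\Tr_{\QQ_{p^m}/K}(\ZZ[\zeta_{p^m}])$ in $\ZZ_K$, both governed by the totally ramified prime $p$ in $K\subseteq\QQ_{p^m}$. The main obstacle is to show that the resulting $p$-power never exceeds $\exp(P)=p^m$: one must suppress the centralizer contribution (e.g.\ by inducing from maximal abelian subgroups containing $g$, so that $[\C_P(C):C]\le p^{n-m}$ is absorbed) and bound the period-lattice index by the different of $\QQ_{p^m}/K$. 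Everything preceding this is formal, so I expect the entire difficulty to be concentrated in this single ramification estimate.
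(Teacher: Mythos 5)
Your reduction to $p$-groups is sound and is exactly the paper's \autoref{dirprod}: the Sylow decomposition, the identity $\ZZ[G]=\ZZ[P_1]\cdots\ZZ[P_r]$, the coprime-discriminant argument for $\ZZ_K=\ZZ_{K_1}\cdots\ZZ_{K_r}$, and your localization argument showing that the exponent of $\ZZ_K/\ZZ[G]$ is the product of the exponents of the $\ZZ_{K_j}/\ZZ[P_j]$ are all correct. The fatal step is the next one: you declare that it suffices to prove $\exp(P)\,\ZZ_K\subseteq\ZZ[P]$ for a non-cyclic $p$-group $P$, i.e.\ that $\exp(\ZZ_K/\ZZ[P])$ divides $\exp(P)$. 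This statement is \emph{false}, and the paper itself provides the counterexample: by \autoref{extra}, the central product $P$ of an extraspecial group of order $p^{2n+1}$ with a cyclic group of order $p^2$ has order $p^{2n+2}$ and exponent $p^2$, yet $\ZZ_K/\ZZ[P]\cong C_{p^n}^{(p-1)^2}$ has exponent $p^n$, which exceeds $\exp(P)=p^2$ as soon as $n\ge 3$. (The paper states explicitly after \autoref{extra} that $\exp(G)\ZZ_{\QQ(G)}\subseteq\ZZ[G]$ fails in general.) Consequently no ramification estimate, however sharp, can close the gap you isolate at the end: the denominator arising from inducing characters off cyclic subgroups is genuinely the normalizer index $\lvert\N_P(\langle g\rangle)/\langle g\rangle\rvert$ (this is the paper's \autoref{Qg}), which for the central element $g$ of order $p^2$ in the example above equals $p^{2n}$ and cannot be pushed down to $\exp(P)$.

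Beyond this, the part you defer as ``a single ramification estimate'' is where essentially all of the paper's work lies, and your sketch never engages with the feature that makes it hard: the case $p=2$. For odd $p$, every $\Gal(\QQ_{|\langle g\rangle|}|\QQ(g))$ is a cyclic $p$-group by \autoref{navarro}, so each $\QQ(g)$ is itself cyclotomic, these fields form a chain, and $\QQ(G)=\QQ(g_0)$ for a single element $g_0$; then \autoref{Qg} alone bounds the exponent by $|G|/|\langle g_0\rangle|$, a proper divisor. For $p=2$, however, $\QQ(g)$ can be the non-cyclotomic field $\QQ(\zeta\pm\overline{\zeta})$ and $\QQ(G)$ may differ from \emph{every} column field $\QQ(g)$, so ``assembling Gaussian periods over the various cyclic subgroups'' cannot produce $\ZZ_K$ at all; one needs the $2$-group machinery of Lemmas \ref{2fields} and \ref{lem2} (tracking $2$-adic valuations of coefficient gcds of character values) plus a final counting argument $2M\psi(1)<|G|$. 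So the proposal both rests on a false intermediate claim and leaves the actual difficulty unproven; it does not constitute a proof.
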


In the final section we exhibit many examples which indicate that Theorem~B might be true without the nilpotency hypothesis.

\begin{ConC}\label{conj}
Let $G\ne 1$ be a finite group and $K:=\QQ(G)$. Then the exponent of $\ZZ_K/\ZZ[G]$ is a proper divisor of $|G|$.
\end{ConC}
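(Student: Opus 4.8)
The plan is to separate the statement into two parts: the divisibility $|G|\ZZ_K\subseteq\ZZ[G]$ (equivalently, that the exponent $e$ of $\ZZ_K/\ZZ[G]$ divides $|G|$) and the strict inequality $e\ne|G|$. Throughout I would exploit that $\ZZ[G]$ is a $\Gal(K/\QQ)$-stable order: for $\sigma_k\in\Gal(\QQ_{|G|}/\QQ)$ one has $\sigma_k(\chi(g))=\chi(g^k)$, so $\sigma_k$ merely permutes the generators $\chi(g)$ of $\ZZ[G]$. Hence $\ZZ_K/\ZZ[G]$ is a $\Gal(K/\QQ)$-module, and by Theorem~A its exponent is a product of prime powers $p^{a_p}$ with $p\mid|G|$. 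This reduces the problem to one prime at a time: writing $v_p(|G|)$ for the $p$-adic valuation, it suffices to prove $a_p\le v_p(|G|)$ for every $p$ (divisibility) and $a_p<v_p(|G|)$ for at least one $p$ (properness).

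For the divisibility I would pass to the additive column modules. Fix $g\in G$ of order $m$, let $C=\langle g\rangle$, and write $R(G)$ for the character ring. Identifying $R(C)\cong\ZZ[\zeta_m]$ by evaluation at $g$, the restriction map $\operatorname{Res}^G_C\colon R(G)\to R(C)$ sends $\chi\mapsto\chi(g)$, so its image is exactly the $\ZZ$-span $V_g:=\sum_{\chi}\ZZ\,\chi(g)$. By Artin's induction theorem together with self-duality of $R(G)$ under the standard inner product, the cokernel of $\operatorname{Res}^G_C$ is annihilated by $|G|$, whence $|G|\,\ZZ[\zeta_m]\subseteq V_g\subseteq\ZZ[G]$. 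Summing the $V_g$ over a system of representatives and intersecting with $K$, the goal becomes to show that $|G|\,\ZZ_K$ lies in the additive group $M:=\sum_{\chi,g}\ZZ\,\chi(g)$. The technical crux here is that the individual rings $\ZZ[\zeta_m]$ are not contained in $K$: one must descend the per-element bounds, which are naturally statements in $\ZZ[\zeta_{\exp(G)}]$, to the subfield $K=\QQ(G)$. I expect this descent to be controllable by a ramification estimate, since $K\subseteq\QQ_{\exp(G)}$ and the $p$-valuation of the different of $\QQ_{\exp(G)}/\QQ$ is bounded in terms of $v_p(\exp(G))\le v_p(|G|)$.

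For the properness I would try to sharpen the local bound at a single well-chosen prime $p$ to $\tfrac{|G|}{p}\ZZ_K\subseteq\ZZ[G]$. In the nilpotent case this is the content of Theorem~B: one factors $G=\prod_p P_p$, reduces to $p$-groups, and uses that a non-cyclic Sylow subgroup $P_p$ has $\exp(P_p)$ strictly smaller than $|P_p|$, while a cyclic Sylow contributes a trivial $p$-part. A natural route to the conjecture is therefore to prove the stronger assertion that $e$ divides $\exp(G)$: this divides $|G|$, equals $|G|$ only when $G$ is cyclic, and in the abelian case $\ZZ[G]=\ZZ_K$ forces $e=1$. The order-$240$ example is consistent with this, its quotient having exponent $120$.

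The hard part will be the passage from nilpotent to arbitrary $G$. When $G$ is not nilpotent it need not split as a direct product of its Sylow subgroups, the character fields $\QQ(H)$ of subgroups can strictly contain $\QQ(G)$, and the denominators produced by the induction formulae mix several primes, so the clean reduction to $p$-groups underlying Theorem~B collapses. Worse, $\exp(G)$ need not be attained by any single element (for instance in $S_3$), which defeats the ``one large character value generates the ramified part of $\ZZ_K$'' argument available in the nilpotent setting. I therefore expect the real obstacle to be quantifying how non-abelian composition factors inflate the index $[\ZZ_K:\ZZ[G]]$ at a given prime --- precisely the phenomenon visible in the order-$240$ example --- and proving that this inflation never raises the $p$-local exponent all the way to $v_p(|G|)$ simultaneously for every $p$.
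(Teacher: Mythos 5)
Be clear about what is being reviewed here: the statement is Conjecture~C, and the paper does not prove it --- it proves Theorem~A (the prime divisors of $|\ZZ_K/\ZZ[G]|$ divide $|G|$), Theorem~B (the nilpotent case), and checks examples. Your text is likewise a research plan rather than a proof: you yourself flag the passage from nilpotent to arbitrary $G$ as unresolved, and that is exactly where the paper stops as well. So there is no complete argument to certify; what can be assessed is whether the intermediate steps you propose are sound, and two of them are not.

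First, the Artin-induction/self-duality step is false as stated. The evaluation map $R(C)\to\ZZ[\zeta_m]$, $\psi\mapsto\psi(g)$, is not an isomorphism (the ranks are $m$ and $\phi(m)$), and the cokernel of $\operatorname{Res}^G_C\colon R(G)\to R(C)$ is in general infinite, so it cannot be annihilated by $|G|$; worse, your conclusion $|G|\,\ZZ[\zeta_m]\subseteq\ZZ[g]$ would force $\QQ_m=\QQ(g)$, which fails already for $G=Q_8$ and $g$ of order $4$: there the character values at $g$ are $\pm1$ and $0$, so $\ZZ[g]=\ZZ$, while $8\sqrt{-1}\notin\ZZ$. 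The correct statement --- and the way the paper obtains divisibility --- is \autoref{Qg}: $\lvert\N_G(\langle g\rangle)/\langle g\rangle\rvert\,\ZZ_{\QQ(g)}\subseteq\ZZ[g]$, proved by inducing a character of $\langle g\rangle$ whose value at $g$ is a Leopoldt trace generator of $\ZZ_{\QQ(g)}$ (\autoref{leopoldt} combined with \autoref{navarro}); the descent from $\QQ_m$ to $\QQ(g)$ happens \emph{before} inducing, not afterwards via a ramification estimate. Second, your proposed strengthening --- that the exponent of $\ZZ_K/\ZZ[G]$ divides $\exp(G)$ --- is refuted by the paper's own \autoref{extra}: the central product of an extraspecial group of order $p^{2n+1}$ with $C_{p^2}$ has order $p^{2n+2}$ and exponent $p^2$, yet $\ZZ_K/\ZZ[G]\cong C_{p^n}^{(p-1)^2}$, so for $n\ge 3$ the exponent $p^n$ of the quotient does not divide $\exp(G)=p^2$; the paper says explicitly after \autoref{extra} that $\exp(G)\ZZ_{\QQ(G)}\subseteq\ZZ[G]$ fails in general. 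So even as a plan, the route through $\exp(G)$ is a dead end: any proof of Conjecture~C must allow the exponent of $\ZZ_K/\ZZ[G]$ to exceed $\exp(G)$ while still remaining a proper divisor of $|G|$. (Your opening observations --- that $\ZZ[G]$ is Galois-stable and that Theorem~A reduces the problem to one prime at a time --- are correct, but they do not advance beyond what the paper already establishes.)
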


\section{Preliminaries}
In addition to the notation introduced above, we define 
\begin{align*}
\QQ(g)&:=\QQ(\chi(g):\chi\in\Irr(G))&&(g\in G),\\
\ZZ[g]&:=\ZZ[\chi(g):\chi\in\Irr(G)],\\
\QQ(\chi)&:=\QQ(\chi(g):g\in G)&&(\chi\in\Irr(G)),\\
\ZZ[\chi]&:=\ZZ[\chi(g):g\in G].
\end{align*}
For number fields $K\subseteq L$ we denote the relative discriminant of $L$ with respect to $K$ by $d_{L|K}\in\ZZ_K$. If $K=\QQ$ we write $d_L:=d_{L|\QQ}$ as usual. 
We make use of the following tools from algebraic number theory.

\begin{Prop}\label{disc1}
The discriminant of any subfield of $\QQ_n$ divides $n^{\phi(n)}$. 
\end{Prop}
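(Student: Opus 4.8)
The plan is to reduce everything to two standard facts from algebraic number theory: the behaviour of discriminants in towers, and explicit control of the discriminant of the full cyclotomic field $\QQ_n$. Throughout let $F$ be a subfield of $\QQ_n$. Since $\Gal(\QQ_n|\QQ)\cong(\ZZ/n\ZZ)^\times$ is abelian, $F|\QQ$ is automatically Galois (indeed abelian), which is what makes the arguments below available.

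First I would record that $|d_{\QQ_n}|$ divides $n^{\phi(n)}$. The quickest route is via the different: since $\ZZ_{\QQ_n}=\ZZ[\zeta_n]$ for a primitive $n$-th root of unity $\zeta_n$, the different of $\QQ_n|\QQ$ is generated by $\Phi_n'(\zeta_n)$, where $\Phi_n$ is the $n$-th cyclotomic polynomial. Differentiating $x^n-1=\Phi_n(x)g(x)$ and evaluating at $\zeta_n$ shows that $\Phi_n'(\zeta_n)$ divides $n\zeta_n^{n-1}$, and as $\zeta_n$ is a unit we get $\Phi_n'(\zeta_n)\mid n$ in $\ZZ[\zeta_n]$. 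Taking norms turns the divisibility of the different by $(n)$ into $d_{\QQ_n}\mid n^{\phi(n)}$. (Alternatively one may simply quote the classical closed formula for $d_{\QQ_n}$.)

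Next I would invoke transitivity of the discriminant in the tower $\QQ\subseteq F\subseteq\QQ_n$, namely
\[ d_{\QQ_n}=N_{F|\QQ}(d_{\QQ_n|F})\cdot d_F^{[\QQ_n:F]}. \]
In particular $d_F^{[\QQ_n:F]}$ divides $d_{\QQ_n}$, hence divides $n^{\phi(n)}$ by the previous step. Since $d_F$ is a nonzero integer, $|d_F|$ divides $|d_F|^{[\QQ_n:F]}$, and therefore $d_F\mid n^{\phi(n)}$, which is the claim.

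There is no deep obstacle here; the only point requiring care is the first step, i.e.\ pinning down $|d_{\QQ_n}|\mid n^{\phi(n)}$ cleanly and keeping the integrality and unit bookkeeping in the different computation correct. A completely parallel and perhaps shorter route avoids $\QQ_n$ altogether: as $F|\QQ$ is abelian, the conductor-discriminant formula gives $|d_F|=\prod_\chi \mathfrak{f}(\chi)$, the product ranging over the $[F:\QQ]$ characters of $\Gal(F|\QQ)$; each such $\chi$ factors through $\Gal(\QQ_n|\QQ)$ and hence has conductor $\mathfrak{f}(\chi)\mid n$, so $|d_F|\mid n^{[F:\QQ]}\mid n^{\phi(n)}$. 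I would present whichever of these the surrounding text already has machinery for.
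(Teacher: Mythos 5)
Your proposal is correct, and its key step is genuinely different from the paper's. The paper bounds $d_{\QQ_n}$ by decomposing into prime-power pieces: it quotes the explicit value $d_{\QQ_{p^m}}=\pm p^{p^{m-1}(mp-m-1)}$ (Neukirch, Lemma~I.10.1) and then assembles general $n$ via the coprime-discriminant compositum formula (Proposition~I.2.11). You instead control $d_{\QQ_n}$ in one stroke through the different: from $x^n-1=\Phi_n(x)g(x)$ you get $\Phi_n'(\zeta_n)\mid n\zeta_n^{n-1}$, hence $\Phi_n'(\zeta_n)\mid n$ in $\ZZ[\zeta_n]$, and taking norms yields $d_{\QQ_n}\mid n^{\phi(n)}$. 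The descent to the subfield $F$ is then the same tool in both proofs: your tower formula $d_{\QQ_n}=\N_{F|\QQ}(d_{\QQ_n|F})\,d_F^{[\QQ_n:F]}$ is precisely the content of the paper's citation of Neukirch, Corollary~III.2.10. What your route buys: it avoids the prime factorization and compositum bookkeeping entirely, and it records the clean bound $d_{\QQ_n}\mid n^{\phi(n)}$; the paper, as literally written, only states $d_n\mid n^n$ for composite $n$, and extracting the exponent $\phi(n)$ claimed in the proposition from its argument requires tracking the exponents $\phi(n)/\phi(p_i^{m_i})$ in the compositum formula --- a detail your argument sidesteps. Your conductor-discriminant alternative departs even further from the paper and actually proves the sharper bound $\lvert d_F\rvert\mid n^{[F:\QQ]}$, though it invokes heavier machinery than anything the paper relies on; the different-based argument is the variant closest in spirit to the sources (Neukirch, Ch.~III) already cited.
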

\begin{proof}
If $n=p^m$ is a power of a prime $p$, then by \cite[Lemma~I.10.1]{NeukirchE} the discriminant $d_n$ of $\QQ_n$ is $\pm p^{p^{m-1}(mp-m-1)}$, a divisor of $n^{\phi(n)}=p^{mp^{m-1}(p-1)}$. 
For arbitrary $n$ we obtain $d_n\mid n^n$ from \cite[Proposition~I.2.11]{NeukirchE}. Now if $K\subseteq\QQ_n$ is any subfield, then by \cite[Corollary~III.2.10]{NeukirchE} even $d_K^{|\QQ_n:K|}$ divides $d_n$.
\end{proof}

Although we only need a weak version of the following result, it seems worth stating a strong form.

\begin{Prop}\label{disc2}
Let $K$ and $L$ be Galois number fields. Then 
\[\gcd(d_K,d_L)\ZZ_{KL}\subseteq\frac{\gcd(d_K,d_L)}{d_{K\cap L}^m}\ZZ_{KL}\subseteq\ZZ_K\ZZ_L\] 
where $m:=\min\{|KL:K|,|KL:L|\}$.
In particular, $\ZZ_{KL}=\ZZ_K\ZZ_L$ if $d_K$ and $d_L$ are coprime.
\end{Prop}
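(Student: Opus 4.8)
The plan is to prove the two inclusions separately, the left one being essentially formal and the right one carrying all the content. Throughout write $F:=K\cap L$. Since $K$ and $L$ are Galois over $\QQ$, so is $F$, both $K/F$ and $L/F$ are Galois, and restriction of automorphisms yields $\Gal(KL|L)\cong\Gal(K|F)$ and $\Gal(KL|K)\cong\Gal(L|F)$; in particular $|KL:L|=|K:F|$, $|KL:K|=|L:F|$, and $K$ and $L$ are linearly disjoint over $F$, so that $KL=K\otimes_F L$.

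For the first inclusion I would invoke the transitivity of the discriminant (\cite[Corollary~III.2.10]{NeukirchE}), which gives $d_K=\mathrm{N}_{F|\QQ}(d_{K|F})\,d_F^{|K:F|}$ and hence $d_F^{|K:F|}\mid d_K$; symmetrically $d_F^{|L:F|}\mid d_L$. As $m=\min\{|K:F|,|L:F|\}$ it follows that $d_F^m$ divides both $d_K$ and $d_L$, so $d_F^m\mid\gcd(d_K,d_L)$ and $\gcd(d_K,d_L)/d_F^m$ is a rational integer dividing $\gcd(d_K,d_L)$. Then $\gcd(d_K,d_L)\ZZ_{KL}=d_F^m\cdot\tfrac{\gcd(d_K,d_L)}{d_F^m}\ZZ_{KL}\subseteq\tfrac{\gcd(d_K,d_L)}{d_F^m}\ZZ_{KL}$, which is the first inclusion.

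The heart of the matter is the second inclusion, for which I would first establish the key estimate $e_K\ZZ_{KL}\subseteq\ZZ_K\ZZ_L$, where $e_K:=\mathrm{N}_{F|\QQ}(d_{K|F})=d_K/d_F^{|K:F|}$, together with its symmetric analogue for $e_L$. To prove it I may localise at each rational prime and thereby assume $\ZZ_K$ free over $\ZZ_F$, say with basis $\theta_1,\dots,\theta_s$ ($s=|K:F|$); let $\theta_1^*,\dots,\theta_s^*$ be the dual basis for $(x,y)\mapsto\Tr_{K|F}(xy)$, so that the $\theta_i^*$ form an $\ZZ_F$-basis of the inverse different $\mathfrak{d}_{K|F}^{-1}$. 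Because $KL=K\otimes_F L$, the $\theta_i$ are an $L$-basis of $KL$ and $\Tr_{KL|L}$ restricts to $\Tr_{K|F}$ on $K$, whence $\Tr_{KL|L}(\theta_i\theta_j^*)=\delta_{ij}$; writing $\alpha=\sum_i c_i\theta_i$ for $\alpha\in\ZZ_{KL}$ with $c_i\in L$, $L$-linearity recovers $c_i=\Tr_{KL|L}(\alpha\theta_i^*)$. Now for any $d$ in the relative discriminant ideal $d_{K|F}\subseteq\ZZ_F$ one has $d\theta_i^*\in\ZZ_K$: indeed $d_{K|F}\,\mathfrak{d}_{K|F}^{-1}\subseteq\ZZ_K$ since, $K/F$ being Galois, $d_{K|F}\ZZ_K=\mathrm{N}_{K|F}(\mathfrak{d}_{K|F})\ZZ_K=\prod_{\sigma}\sigma(\mathfrak{d}_{K|F})$ is divisible by $\mathfrak{d}_{K|F}$. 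Hence $d\,c_i=\Tr_{KL|L}(\alpha\cdot d\theta_i^*)\in\ZZ_L$, so $d\alpha=\sum_i(dc_i)\theta_i\in\sum_i\ZZ_L\theta_i\subseteq\ZZ_K\ZZ_L$; thus $d_{K|F}\ZZ_{KL}\subseteq\ZZ_K\ZZ_L$, and since $e_K\ZZ_F=\mathrm{N}_{F|\QQ}(d_{K|F})\ZZ_F\subseteq d_{K|F}$ (a norm annihilates the corresponding quotient) we get $e_K\ZZ_{KL}\subseteq\ZZ_K\ZZ_L$. As all objects are finitely generated over $\ZZ$, the local inclusions assemble to the global one.

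Finally I would combine the two estimates to $\gcd(e_K,e_L)\ZZ_{KL}\subseteq\ZZ_K\ZZ_L$, and a prime-by-prime comparison of valuations (using $d_K=e_Kd_F^{|K:F|}$, $d_L=e_Ld_F^{|L:F|}$ and $|K:F|,|L:F|\ge m$) shows $\gcd(e_K,e_L)\mid\gcd(d_K,d_L)/d_F^m$, whence $\tfrac{\gcd(d_K,d_L)}{d_F^m}\ZZ_{KL}\subseteq\gcd(e_K,e_L)\ZZ_{KL}\subseteq\ZZ_K\ZZ_L$. The ``in particular'' is then immediate: if $d_K$ and $d_L$ are coprime, then $d_F\mid\gcd(d_K,d_L)=1$ forces $F=\QQ$, and the displayed chain reads $\ZZ_{KL}\subseteq\ZZ_K\ZZ_L$, giving equality together with the trivial reverse inclusion. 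I expect the main obstacle to be precisely the relative setting over $F\ne\QQ$: making the trace–dual argument rigorous despite $\ZZ_K$ possibly failing to be $\ZZ_F$-free (handled by localisation) and translating the relative different and discriminant bounds into the clean rational-integer factor $\gcd(d_K,d_L)/d_F^m$.
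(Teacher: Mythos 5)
Your proof is correct, and while it follows the same overall skeleton as the paper's — work over $F=K\cap L$, show that $d_K/d_F^{|K:F|}$ (and symmetrically $d_L/d_F^{|L:F|}$) kills $\ZZ_{KL}$ modulo $\ZZ_K\ZZ_L$, then combine the two via gcd and the transitivity formula for discriminants — the engine driving the key estimate is genuinely different. The paper, following Neukirch's proof of Proposition~I.2.11, chooses a $\ZZ_F$-basis $b_1,\dots,b_n$ of $\ZZ_K$, forms the matrix $D=(\tau_i(b_j))$ of Galois conjugates, and uses the adjugate/Cramer identity together with $\det(D)^2=d_{K|F}$ to bound the denominators of the coefficients $a_{ij}$ of $\alpha\in\ZZ_{KL}$. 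You instead pass through the inverse different: after localizing to get a basis, you use the trace-dual basis $\theta_i^*$ spanning $\mathfrak{d}_{K|F}^{-1}$, recover coefficients as $c_i=\Tr_{KL|L}(\alpha\theta_i^*)$, and clear denominators via $d_{K|F}\mathfrak{d}_{K|F}^{-1}\subseteq\ZZ_K$, which you deduce from $d_{K|F}=\N_{K|F}(\mathfrak{d}_{K|F})$ and the Galois-stability of the different. Each route has its merits. The paper's is elementary matrix algebra, but it silently assumes that $\ZZ_K$ is a \emph{free} $\ZZ_F$-module, which is not automatic once $F\neq\QQ$ (rings of integers are in general only locally free over subrings of integers); your localization step, together with the observation that the desired inclusion of finitely generated $\ZZ$-modules can be checked prime by prime, repairs exactly this point, so your argument is the more watertight of the two. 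You also get a slightly stronger intermediate statement (the whole ideal $d_{K|F}$, not merely the rational integer $d_K/d_F^{|K:F|}$, annihilates $\ZZ_{KL}/\ZZ_K\ZZ_L$), and your justification $\N_{F|\QQ}(d_{K|F})\in d_{K|F}$ via "the norm annihilates the quotient" does not use that $F$ is Galois over $\QQ$, whereas the paper's divisibility $d_{K|F}\mid\N_{F|\QQ}(d_{K|F})$ in $\ZZ_F$ invokes that hypothesis. The concluding gcd/valuation bookkeeping and the derivation of the "in particular" clause coincide in both proofs.
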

\begin{proof}
Most textbooks only deal with the last claim. 
To prove the general case we follow \cite[Proposition~I.2.11]{NeukirchE}: 

We consider the compositum $KL$ as an extension over $M:=K\cap L$. Note that $\ZZ_{KL}$ ($\ZZ_K$, $\ZZ_L$ respectively) is the integral closure of $\ZZ_M$ in $KL$ ($K$, $L$ respectively). Let $b_1,\ldots,b_n$ be a $\ZZ_M$-basis of $\ZZ_K$ and let $c_1,\ldots,c_m$ be a $\ZZ_M$-basis of $\ZZ_L$. Then $\{b_ic_j:i=1,\ldots,n,j=1,\ldots,m\}$ is an $M$-basis of $KL$ as is well-known. Let $\alpha\in\ZZ_{KL}$ be arbitrary and write
\[\alpha=\sum_{i,j}a_{ij}b_ic_j\]
with $a_{ij}\in M$ for all $i,j$. Since $KL$ is a Galois extension over $\QQ$, it is also a Galois extension over $K$ and over $L$. Thus, we may write $\Gal(KL|K)=\{\sigma_1,\ldots,\sigma_m\}$ and $\Gal(KL|L)=\{\tau_1,\ldots,\tau_n\}$. Then
\[\Gal(KL|M)=\{\sigma_i\tau_j:i=1,\ldots,m,j=1,\ldots,n\}\]
and restriction yields isomorphisms $\Gal(KL|K)\to\Gal(L|M)$ and $\Gal(KL|L)\to\Gal(K|M)$.
Let 
\begin{align*}
D=(\tau_i(b_j))_{i,j=1}^n\in\ZZ_K^{n\times n},&&a=(\tau_1(\alpha),\ldots,\tau_n(\alpha))\in\ZZ_M^n,&& b:=\Bigl(\sum_{j=1}^ma_{ij}c_j\Bigr)_{i=1}^n\in L^n.
\end{align*} 
Then 
\[\det(D)^2=\det(D^\text{t}D)=\det((\Tr_{K|M}(b_ib_j)_{i,j}))=d_{K|M}\] 
(here $D^\text{t}$ denotes the transpose of $D$ and $\Tr_{K|M}$ is the trace map of $K$ with respect to $M$). Moreover, $Db=a$. 
Denoting the adjoint matrix of $D$ by $D^*\in\ZZ_K^{n\times n}$ we obtain $\det(D)b=D^*Db=D^*a$. The right hand side is an integral vector and so must be the left hand side. It follows that
\[d_{K|M}a_{ij}=\det(D)^2a_{ij}\in\ZZ_M\subseteq\ZZ_K\]
for all $i,j$. Now by \cite[Corollary~III.2.10]{NeukirchE}, we have 
\[d_K=d_M^{|K:M|}\N_M(d_{K|M})\] 
where $\N_M$ denotes the norm map of $M$ with respect to $\QQ$. Since $M$ is a Galois extension, the norm of $d_{K|M}$ is the product of all Galois conjugates of $d_{K|M}$ in $M$. In particular, $d_{K|M}$ divides $\N_M(d_{K|M})=d_K/d_M^{|K:M|}$ in $\ZZ_M$. Hence, $\frac{d_K}{d_M^{|K:M|}}a_{ij}\in\ZZ_M$ for all $i,j$. By a symmetric argument, $\frac{d_L}{d_M^{|L:M|}}a_{ij}\in\ZZ_M$ and therefore $\frac{\gcd(d_K,d_L)}{d_M^m}a_{ij}\in\ZZ_M$. Hence, we derive
\[\frac{\gcd(d_K,d_L)}{d_M^m}\alpha\in\ZZ_K\ZZ_L\]
as desired.
\end{proof}

It is well-known that $\ZZ_{\QQ_n}=\ZZ[\zeta]$ for every primitive $n$-th root of unity $\zeta$. We also need the following refinements.

\begin{Prop}[Leopoldt, see {\cite[Proposition~6.1]{WangWeiss}}]\label{leopoldt}
Let $K$ be a number field contained in $\QQ_n$. Then $\ZZ_K$ is generated as abelian group by the traces 
\[\sum_{\sigma\in\Gal(K(\zeta)|K)}\sigma(\zeta)\]
of $n$-th roots of unity $\zeta$.
\end{Prop}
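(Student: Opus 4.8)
The plan is to prove the equivalent statement that the $\ZZ$-module
\[T:=\bigl\langle\, t(\zeta) : \zeta^n=1\,\bigr\rangle_\ZZ,\qquad t(\zeta):=\sum_{\sigma\in\Gal(K(\zeta)|K)}\sigma(\zeta)\]
coincides with $\ZZ_K$. Each $t(\zeta)$ is the relative trace $\Tr_{K(\zeta)|K}(\zeta)$, hence an algebraic integer lying in $K$, so $T\subseteq\ZZ_K$, and taking $\zeta=1$ shows $1\in T$. By transitivity of the trace one has $\Tr_{\QQ_n|K}(\zeta)=|\QQ_n:K(\zeta)|\,t(\zeta)$, so the lattice $\Tr_{\QQ_n|K}(\ZZ_{\QQ_n})$ is contained in $T$. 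Since $\ZZ_{\QQ_n}=\ZZ[\zeta_n]$ is spanned over $\QQ$ by roots of unity and the $\QQ$-linear map $\Tr_{\QQ_n|K}\colon\QQ_n\to K$ is surjective, these full traces already span $K$ over $\QQ$; hence $T$ has full rank and $[\ZZ_K:T]$ is finite. It therefore suffices to show that this index is coprime to every rational prime $p$, which I would verify after tensoring with $\ZZ_p$, reducing to the comparison $T\otimes\ZZ_p=\ZZ_K\otimes\ZZ_p$ inside $\ZZ_K\otimes\ZZ_p=\prod_{\mathfrak p\mid p}\ZZ_{K,\mathfrak p}$.

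For every prime $p$ with $p^2\nmid n$ the extension $\QQ_n|\QQ$, and hence $\QQ_n|K$, is at worst tamely ramified at $p$. In the tamely ramified case the local different exponent $d$ and ramification index $e$ satisfy $\lfloor d/e\rfloor=0$, so the trace $\Tr_{\QQ_n|K}\colon\ZZ_{\QQ_n}\to\ZZ_K$ becomes surjective after localizing at $p$. Consequently $\Tr_{\QQ_n|K}(\ZZ_{\QQ_n})\otimes\ZZ_p=\ZZ_K\otimes\ZZ_p$, and squeezing between the inclusions $\Tr_{\QQ_n|K}(\ZZ_{\QQ_n})\subseteq T\subseteq\ZZ_K$ yields $T\otimes\ZZ_p=\ZZ_K\otimes\ZZ_p$. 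Thus all primes with $p\nmid n$ or $p\,\|\,n$ are disposed of without ever using the relative (as opposed to full) traces.

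The remaining, and genuinely hard, case is a prime $p$ with $p^2\mid n$, where $\QQ_n|K$ is wildly ramified and $\Tr_{\QQ_n|K}(\ZZ_{\QQ_n})$ is a proper ideal of $\ZZ_K$ locally at $p$: the full-trace lattice is strictly too small, and the point of the relative traces $t(\zeta)$ is precisely that the factor $|\QQ_n:K(\zeta)|$ measuring this deficiency has been divided out. For each $\mathfrak p\mid p$ I would have to exhibit, among the $t(\zeta)$ with $\zeta$ of $p$-power and mixed order, elements whose images generate $\ZZ_{K,\mathfrak p}$ as a $\ZZ_p$-module together with their $\Gal(K|\QQ)$-conjugates. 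Following Leopoldt, this is carried out by expressing these traces through Lagrange resolvents and Gauss sums attached to $p$-power roots of unity, computing their $\mathfrak p$-adic valuations, and checking that the resulting reduction modulo $\mathfrak p$ has full rank. I expect this wildly ramified local computation to be the main obstacle; it is the technical heart of Leopoldt's theorem, and for it I would ultimately defer to the explicit analysis in \cite[Proposition~6.1]{WangWeiss}.
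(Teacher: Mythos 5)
The first thing to note is that the paper contains no proof of this proposition at all: it is quoted as Leopoldt's theorem, with the proof delegated to \cite[Proposition~6.1]{WangWeiss}. So there is no internal argument to compare against, only that citation. Your preparatory steps are correct as far as they go: $t(\zeta)=\Tr_{K(\zeta)|K}(\zeta)\in\ZZ_K$, so $T\subseteq\ZZ_K$; transitivity of the trace gives $\Tr_{\QQ_n|K}(\zeta)=|\QQ_n:K(\zeta)|\,t(\zeta)$, hence the full-trace lattice $\Tr_{\QQ_n|K}(\ZZ_{\QQ_n})$ is contained in $T$ and has full rank (since $\ZZ_{\QQ_n}$ is generated as an abelian group by $n$-th roots of unity and the trace is surjective onto $K$); and the local surjectivity of the trace under tame ramification correctly disposes of every prime $p$ with $p^2\nmid n$.

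However, the proposal has a genuine gap, and you name it yourself: for primes $p$ with $p^2\mid n$ nothing is actually proved. The resolvent/Gauss-sum analysis you gesture at \emph{is} the content of Leopoldt's theorem --- the tame case follows from soft ramification theory, as your own argument shows --- and you explicitly defer it to the very reference the paper cites. What you have is therefore a correct reduction of the statement to its hard core, plus a citation for that core; as a standalone proof it is incomplete exactly where the theorem is nontrivial. That the full-trace lattice genuinely cannot do the job is visible already in the simplest wild example: for $K=\QQ$ and $n=p^2$ the traces of roots of unity of orders $p^2$, $p$, $1$ are $0$, $-p$, $p(p-1)$, so $\Tr_{\QQ_{p^2}|\QQ}(\ZZ_{\QQ_{p^2}})=p\ZZ$, and one needs the relative trace $t(1)=1$ to recover $\ZZ$; for general $K$ one must produce enough relative traces with the right $\mathfrak{p}$-adic valuations, and nothing in your outline does this. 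One further imprecision: for $p^2\mid n$ the extension $\QQ_n|K$ need not be wildly ramified (take $K=\QQ_n$), so your claim that the full-trace lattice is then always locally proper in $\ZZ_K$ is not correct as stated; what is true is only that the tame argument no longer applies. In fairness, since the paper itself treats the proposition as a quoted result, your write-up does no less than the paper does --- but it should be presented as a reduction plus citation, not as a proof.
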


\begin{Lem}\label{p2}
Every subfield of $\QQ_{2^n}$ has the form $K=\QQ(\xi)$ where $\xi\in\{\zeta,\zeta\pm\overline{\zeta}\}$ and $\zeta$ is a $2^n$-th root of unity. 
The inclusion of subfields is given as follows
\begin{center}
\begin{tikzpicture}[node distance=15mm]
\draw node (1) {$\QQ(\zeta)$};
\draw node[below of=1] (2) {$\QQ(\zeta^2)$};
\draw node[left of=2,node distance=2cm] (3) {$\QQ(\zeta+\overline{\zeta})$};
\draw node[right of=2,node distance=2cm] (4) {$\QQ(\zeta-\overline{\zeta})$};
\draw node[below of=2] (5) {$\QQ(\zeta^4)$};
\draw node[left of=5,node distance=2cm] (6) {$\QQ(\zeta^2+\overline{\zeta}^2)$};
\draw node[right of=5,node distance=2cm] (7) {$\QQ(\zeta^2-\overline{\zeta}^2)$};
\draw node[below of=5] (8) {$\QQ(\sqrt{-1})$};
\draw node[left of=8,node distance=2cm] (9) {$\QQ(\sqrt{2})$};
\draw node[right of=8,node distance=2cm] (10) {$\QQ(\sqrt{-2})$};
\draw node[below of=8] (11) {$\QQ$};
\draw (1)--(2)--(5);
\draw (1)--(3)--(6);
\draw (1)--(4)--(6);
\draw (2)--(6);
\draw (2)--(7);
\draw[dotted,thick] (5)--(8);
\draw[dotted,thick] (5)--(9);
\draw[dotted,thick] (5)--(10);
\draw[dotted,thick] (6)--(9);
\draw[dotted,thick] (7)--(9);
\draw (8)--(11)--(9);
\draw (10)--(11);
\end{tikzpicture}
\end{center}
If $\xi=\zeta\pm\overline{\zeta}$, then the elements $1$ and $\zeta^k+(\pm\overline{\zeta})^k$ with $k=1,\ldots,2^{n-2}-1$ generate $\ZZ_K$ as abelian group.
\end{Lem}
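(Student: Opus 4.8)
The plan is to translate the whole statement into the Galois group $G:=\Gal(\QQ_{2^n}|\QQ)\cong(\ZZ/2^n\ZZ)^\times$ and its subgroup lattice; by the Galois correspondence the subfields of $\QQ_{2^n}$ are exactly the fixed fields $\QQ_{2^n}^H$. The cases $n\le 2$ are trivial, so I assume $n\ge 3$, where $G=\langle-1\rangle\times\langle5\rangle\cong C_2\times C_{2^{n-2}}$; here $-1$ is complex conjugation and $\langle5\rangle=\{x\equiv1\bmod 4\}$ is the kernel of reduction modulo $4$. I also single out the automorphism $a\colon\zeta\mapsto-\overline\zeta$, that is $a\equiv 2^{n-1}-1$, which is the involution fixing $\zeta-\overline\zeta$.

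For the classification I would analyse a subgroup $H\le G$ according to its intersection with $\langle5\rangle$. If $H\subseteq\langle5\rangle$ then $H=\langle5^{2^i}\rangle$ is a kernel of reduction, whose fixed field is a cyclotomic field $\QQ(\omega)$ for a suitable $2^n$-th root of unity $\omega$. Otherwise $H$ meets $\langle5\rangle$ in index two and is generated by $H\cap\langle5\rangle$ together with either $-1$ or an element of the form $-5^{2^{i-1}}$; these two possibilities give precisely the stabilizers of $\omega+\overline\omega$ and of $\omega-\overline\omega$. This exhausts every subgroup, and a short count gives $3(n-2)+2=3n-4$ subgroups in total, hence exactly this many subfields. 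The Hasse diagram then drops out of the identities
\[\omega^2+\overline\omega^2=(\omega+\overline\omega)^2-2=(\omega-\overline\omega)^2+2,\qquad \QQ(\omega^2)\subseteq\QQ(\omega),\]
which produce all the displayed edges; since these relations reproduce themselves one level down, the picture continues unchanged until it reaches the three quadratic subfields $\QQ(\sqrt{-1})$, $\QQ(\sqrt2)$, $\QQ(\sqrt{-2})$ of $\QQ_8$.

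For the last assertion write $K=\QQ(\xi)$ with $\xi=\zeta\pm\overline\zeta$ and put $p_k:=\zeta^k+(\pm\overline\zeta)^k$. As $\zeta$ and $\pm\overline\zeta$ are the two roots of a monic quadratic in $\ZZ[\xi][X]$, the $p_k$ satisfy a monic three-term recurrence $p_k=\xi p_{k-1}\mp p_{k-2}$ with $p_0=2$ and $p_1=\xi$, so $p_k=\xi^k+(\text{lower order})$ is monic of degree $k$ in $\xi$. Hence the transition between $1,\xi,\dots,\xi^{2^{n-2}-1}$ and $1,p_1,\dots,p_{2^{n-2}-1}$ is unipotent and integral, and the claimed elements form a $\ZZ$-basis of $\ZZ[\xi]$. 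The assertion thus reduces to $\ZZ_K=\ZZ[\xi]$. For this I invoke \autoref{leopoldt}: $\ZZ_K$ is generated as an abelian group by the traces $\Tr_{K(\zeta^j)|K}(\zeta^j)$ of the $2^n$-th roots of unity $\zeta^j$. The decisive structural facts are that $\xi$ is fixed exactly by $\{1,a\}$, so $[\QQ_{2^n}:K]=2$ with $\Gal(\QQ_{2^n}|K)=\{1,a\}$, and that the only roots of unity in $K$ are $\pm1$ (indeed $a(\zeta^j)=\zeta^j$ forces $\zeta^{2j}=(-1)^j$, whence $\zeta^j=\pm1$ for $n\ge3$). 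Consequently, whenever $\zeta^j\notin K$ we have $K(\zeta^j)=\QQ_{2^n}$ and
\[\Tr_{K(\zeta^j)|K}(\zeta^j)=\zeta^j+a(\zeta^j)=\zeta^j+(\pm\overline\zeta)^j=p_j\in\ZZ[\xi],\]
while the generators coming from $\zeta^j=\pm1$ equal $\pm1\in\ZZ[\xi]$. Thus every Leopoldt generator lies in $\ZZ[\xi]$, giving $\ZZ_K\subseteq\ZZ[\xi]$, and the reverse inclusion is clear.

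The genuine difficulty is the equality $\ZZ_K=\ZZ[\zeta-\overline\zeta]$ for the imaginary fields: unlike the totally real case $\ZZ_{\QQ_{2^n}^+}=\ZZ[\zeta+\overline\zeta]$, this is not a textbook fact. The delicate point is to keep all the Leopoldt generators under control simultaneously, in particular the traces of the imprimitive roots of unity; this is exactly where the index-two identity $K(\zeta^j)=\QQ_{2^n}$ (valid as soon as $\zeta^j\notin K$) and the absence of nontrivial roots of unity in $K$ enter. By contrast the classification and the diagram are routine, the only care being the small-order boundary cases where the two cosets defining the imaginary stabilizer collapse and one recovers $\QQ(\sqrt{-1})$ or $\QQ(\sqrt{-2})$.
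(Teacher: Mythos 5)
Your proof is correct, but its decisive step is argued by a genuinely different method than in the paper. On the classification, both arguments run through the Galois correspondence for $\Gal(\QQ_{2^n}|\QQ)\cong C_2\times C_{2^{n-2}}$: the paper inducts on $n$, so it may assume $K\nsubseteq\QQ_{2^{n-1}}$, which forces $\Gal(\QQ_{2^n}|K)$ to avoid the involution $\zeta\mapsto-\zeta$ and hence to be trivial or generated by $\zeta\mapsto\overline{\zeta}$ or $\zeta\mapsto-\overline{\zeta}$; you instead enumerate all subgroups by their intersection with $\langle 5\rangle$. That is the same content in different packaging (your count of $3n-4$ subgroups is correct but not needed). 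The genuine divergence is the integral generation claim. The paper proves it by direct descent: write $x\in\ZZ_K$ in the basis $1,\zeta,\ldots,\zeta^{2^{n-1}-1}$ of $\ZZ_{\QQ_{2^n}}=\ZZ[\zeta]$; invariance under the relevant involution forces $a_k=-(\pm1)^k a_{2^{n-1}-k}$, and regrouping exhibits $x$ as an integer combination of $1$ and the $\zeta^k+(\pm\overline{\zeta})^k$ --- two lines, needing nothing beyond $\ZZ_{\QQ_{2^n}}=\ZZ[\zeta]$. You instead show that $\ZZ[\xi]$ is the $\ZZ$-span of $1,p_1,\ldots,p_{2^{n-2}-1}$ via the monic three-term recurrence (a unitriangular change of basis) and then prove $\ZZ_K=\ZZ[\xi]$ from Leopoldt's theorem (\autoref{leopoldt}): since the only roots of unity in $K$ are $\pm1$, every Leopoldt trace is $\pm1$ or equals some $p_j$, because $K(\zeta^j)=\QQ_{2^n}$ as soon as $\zeta^j\notin K$. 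What each approach buys: the paper's computation is shorter and self-contained; yours costs the heavier input of Leopoldt's theorem (which the paper states anyway, so invoking it is legitimate here) but delivers the monogenicity $\ZZ_K=\ZZ[\xi]$ explicitly and treats the real and imaginary fields uniformly. Two harmless slips worth fixing: your $a$ is introduced as $\zeta\mapsto-\overline{\zeta}$ but later denotes whichever involution generates $\Gal(\QQ_{2^n}|K)$, and the identity $\zeta^{2j}=(-1)^j$ pertains to that involution only (for complex conjugation one gets $\zeta^{2j}=1$); in both cases the conclusion $\zeta^j=\pm1$ holds, as you assert.
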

\begin{proof}
If $n\le 2$, then $K\in\{\QQ,\QQ_4\}$ and the claim holds with $\xi=\zeta\in\{1,\sqrt{-1}\}$. Hence, let $n\ge 3$.
By induction on $n$, we may assume that $K\nsubseteq\QQ_{2^{n-1}}$ and $\zeta$ is a primitive $2^n$-th root of unity.
The subfields of $\QQ_{2^n}$ correspond via Galois theory to the subgroups of the Galois group \[\mathcal{G}:=\Gal(\QQ_{2^n}|\QQ)\cong(\ZZ/2^n\ZZ)^\times\cong C_2\times C_{2^{n-2}}.\]
The involutions of $\mathcal{G}$ are $\alpha:\zeta\mapsto\zeta^{-1}=\overline{\zeta}$, $\beta:\zeta\mapsto\zeta^{-1+2^{n-1}}=-\overline{\zeta}$ and $\gamma:\zeta\mapsto\zeta^{1+2^{n-1}}=-\zeta$. Since $K\nsubseteq\QQ_{2^{n-1}}=\QQ_{2^n}^\gamma$, we must have $\Gal(\QQ_{2^n}|K)\in\{\langle\alpha\rangle,\langle\beta\rangle\}$, i.\,e. $K=\QQ(\zeta\pm\overline{\zeta})$.

As remarked above, $1,\zeta,\ldots,\zeta^{2^{n-1}-1}$ is a $\ZZ$-basis of $\ZZ_{\QQ_{2^n}}$. Hence, every $x\in\ZZ_K$ can be written in the form
\[x=\sum_{k=0}^{2^{n-1}-1}a_k\zeta^k\]
with $a_0,\ldots,a_{2^{n-1}-1}\in\ZZ$. Since $x$ is invariant under $\alpha$ or $\beta$, we obtain $a_k=-(\pm1)^ka_{2^{n-1}-k}$ for $k=1,\ldots,2^{n-1}-1$. Hence,
\[x=a_0+\sum_{k=1}^{2^{n-2}-1}a_k(\zeta^k+(\pm\overline{\zeta})^k)\]
and the second claim follows.
\end{proof}

\begin{Prop}[{\cite[Theorem~3.11]{Navarro2}}]\label{navarro}
Let $G$ be a finite group and $g\in G$. Then the natural map
\[\N_G(\langle g\rangle)/\C_G(g)\to\Gal(\QQ_{|\langle g\rangle|}|\QQ(g))\]
is an isomorphism.
\end{Prop}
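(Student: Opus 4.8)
The plan is to realize both sides as subgroups of $\Gal(\QQ_n|\QQ)$, where $n:=|\langle g\rangle|$, and to show they coincide. First I would set up the map explicitly. Since every $x\in\N_G(\langle g\rangle)$ conjugates $g$ to another generator of $\langle g\rangle$, there is a unique $k\in(\ZZ/n\ZZ)^\times$ with $xgx^{-1}=g^k$; writing $\phi(x):=k$ gives a homomorphism $\phi\colon\N_G(\langle g\rangle)\to(\ZZ/n\ZZ)^\times\cong\Gal(\QQ_n|\QQ)$, where $k$ corresponds to the automorphism $\sigma_k\colon\zeta\mapsto\zeta^k$ for a fixed primitive $n$-th root of unity $\zeta$. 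A direct computation shows $\phi$ is multiplicative, and its kernel is exactly $\C_G(g)$, so $\phi$ induces an injection $\overline\phi$ of $\N_G(\langle g\rangle)/\C_G(g)$ into $\Gal(\QQ_n|\QQ)$. It then remains to identify the image of $\overline\phi$ with the subgroup $\Gal(\QQ_n|\QQ(g))$.

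The key tool is the action of $\Gal(\QQ_n|\QQ)$ on character values. For any character $\chi$ of $G$, the value $\chi(g)$ is a sum of eigenvalues of a representing matrix of $g$, hence a sum of $n$-th roots of unity; applying $\sigma_k$ raises each eigenvalue to the $k$-th power, which gives the eigenvalues of $g^k$. Thus $\sigma_k(\chi(g))=\chi(g^k)$ for all $\chi\in\Irr(G)$. Consequently $\sigma_k$ fixes $\QQ(g)$ if and only if $\chi(g^k)=\chi(g)$ for every $\chi\in\Irr(G)$, which by the fact that irreducible characters separate conjugacy classes is equivalent to $g^k$ being $G$-conjugate to $g$. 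Hence $\Gal(\QQ_n|\QQ(g))=\{\sigma_k:g^k\sim_G g\}$.

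Finally I would match this with the image of $\overline\phi$. If $g^k\sim_G g$, say $ygy^{-1}=g^k$ for some $y\in G$, then since $\gcd(k,n)=1$ the element $g^k$ generates $\langle g\rangle$, so $y\langle g\rangle y^{-1}=\langle g\rangle$ and $y\in\N_G(\langle g\rangle)$ with $\phi(y)=k$. Conversely every value of $\phi$ arises this way. Therefore the image of $\overline\phi$ is precisely $\{\sigma_k:g^k\sim_G g\}=\Gal(\QQ_n|\QQ(g))$, and $\overline\phi$ is the desired isomorphism.

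The only genuinely delicate points are the two "if and only if" characterizations funneling through the same set of exponents: the Galois-theoretic one ($\sigma_k$ fixes $\QQ(g)\Leftrightarrow g^k\sim_G g$), which relies on the formula $\sigma_k(\chi(g))=\chi(g^k)$ together with the separation of conjugacy classes by irreducible characters, and the group-theoretic one ($k\in\operatorname{im}\phi\Leftrightarrow g^k\sim_G g$), which hinges on the observation that an element conjugating $g$ to a power $g^k$ with $\gcd(k,n)=1$ automatically normalizes $\langle g\rangle$. Everything else is routine bookkeeping with the identification $(\ZZ/n\ZZ)^\times\cong\Gal(\QQ_n|\QQ)$.
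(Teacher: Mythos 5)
Your proof is correct. Note, however, that the paper does not prove this statement at all: it is quoted verbatim from the literature (Navarro, \emph{Character theory and the McKay conjecture}, Theorem~3.11), so there is no internal proof to compare against. Your argument is essentially the standard one (and close to Navarro's own): realize the conjugation action as a homomorphism $\phi\colon\N_G(\langle g\rangle)\to(\ZZ/n\ZZ)^\times\cong\Gal(\QQ_n|\QQ)$ with kernel $\C_G(g)$, use $\sigma_k(\chi(g))=\chi(g^k)$ together with the fact that irreducible characters separate conjugacy classes to identify $\Gal(\QQ_n|\QQ(g))$ with $\{\sigma_k:g^k\sim_G g\}$, and observe that any $y$ with $ygy^{-1}=g^k$, $\gcd(k,n)=1$, automatically lies in $\N_G(\langle g\rangle)$. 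All three of these steps are handled correctly in your write-up, so the proposal stands as a complete, self-contained proof of the cited result.
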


\section{General results}

We start our investigation with the “column fields” $\QQ(g)$.
Since products of characters are characters, we have $\ZZ[g]=\sum_{\chi\in\Irr(G)}\ZZ\chi(g)$.

\begin{Prop}\label{Qg}
For every finite group $G$ and $g\in G$ we have
\[\lvert\N_G(\langle g\rangle)/\langle g\rangle\rvert\ZZ_{\QQ(g)}\subseteq\ZZ[g].\]
\end{Prop}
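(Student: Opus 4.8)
The plan is to prove $\lvert\N_G(\langle g\rangle)/\langle g\rangle\rvert\,\ZZ_{\QQ(g)}\subseteq\ZZ[g]$ by combining Leopoldt's description of $\ZZ_{\QQ(g)}$ (Proposition~\ref{leopoldt}) with the identification of $\Gal(\QQ_{|\langle g\rangle|}\mid\QQ(g))$ as $\N_G(\langle g\rangle)/\C_G(g)$ from Proposition~\ref{navarro}. Write $n:=|\langle g\rangle|=\exp(\langle g\rangle)$, so $\QQ(g)\subseteq\QQ_n$, and let $\zeta$ range over the $n$-th roots of unity. By Leopoldt the ring $\ZZ_{\QQ(g)}$ is generated as an abelian group by the traces
\[\operatorname{tr}(\zeta):=\sum_{\sigma\in\Gal(\QQ(g)(\zeta)\mid\QQ(g))}\sigma(\zeta).\]
Hence it suffices to show that $\lvert\N_G(\langle g\rangle)/\langle g\rangle\rvert\cdot\operatorname{tr}(\zeta)\in\ZZ[g]$ for every such $\zeta$.

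First I would reduce the Galois group appearing in the trace to the one controlled by Proposition~\ref{navarro}. Since $\zeta\in\QQ_n$ we have $\QQ(g)(\zeta)=\QQ_n$, so the summation runs over $\Gal(\QQ_n\mid\QQ(g))\cong\N_G(\langle g\rangle)/\C_G(g)$, where the isomorphism is the \emph{natural} map: each coset $x\C_G(g)$ with $x\in\N_G(\langle g\rangle)$ acts on $\langle g\rangle$ by conjugation, sending $g\mapsto g^{x}=g^{i}$ for some $i$ coprime to $n$, and the corresponding field automorphism sends $\zeta\mapsto\zeta^{i}$. The key point is that this action is realized simultaneously on \emph{all} character values: for any $\chi\in\Irr(G)$ and any $x\in\N_G(\langle g\rangle)$ one has $\chi(g^{x})=\chi(g)$, because $g^{x}$ is conjugate to $g$ in $G$ and characters are class functions. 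Consequently, writing $\sigma_x$ for the Galois element attached to $x$, we get $\sigma_x(\chi(g))=\chi(g^{i})=\chi(g^{x})=\chi(g)$ for every $\chi$, confirming that $\sigma_x$ fixes $\QQ(g)$ pointwise as it should.

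Next I would rewrite the trace as a sum over a transversal of $\langle g\rangle$ in $\N_G(\langle g\rangle)$. The image of $\langle g\rangle$ in $\N_G(\langle g\rangle)/\C_G(g)$ corresponds, under Proposition~\ref{navarro}, to those automorphisms $\zeta\mapsto\zeta^{i}$ induced by conjugation by powers of $g$ itself; since $g$ centralizes $\langle g\rangle$, this image is trivial, so in fact $\langle g\rangle\subseteq\C_G(g)$ and the quotient $\N_G(\langle g\rangle)/\langle g\rangle$ surjects onto $\Gal(\QQ_n\mid\QQ(g))$ with kernel $\C_G(g)/\langle g\rangle$ of order $m:=\lvert\C_G(g)/\langle g\rangle\rvert$. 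Choosing a transversal $x_1,\dots,x_r$ of $\C_G(g)$ in $\N_G(\langle g\rangle)$, each giving a distinct $\sigma_{x_t}\in\Gal(\QQ_n\mid\QQ(g))$, Leopoldt's trace becomes $\operatorname{tr}(\zeta)=\sum_{t=1}^{r}\zeta^{i_t}$ where $g^{x_t}=g^{i_t}$. The factor $m\cdot\operatorname{tr}(\zeta)$ is then $\sum_{x\in\N_G(\langle g\rangle)/\langle g\rangle}\zeta^{i_x}$, a full sum over the $\langle g\rangle$-coset representatives, with $\lvert\N_G(\langle g\rangle)/\langle g\rangle\rvert=rm$.

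The heart of the argument — and the step I expect to be the main obstacle — is to realize this multiplied trace as an honest $\ZZ$-combination of values $\chi(g)$. The natural candidate is an \emph{induced character}: let $\lambda\in\Irr(\langle g\rangle)$ be the linear character with $\lambda(g)=\zeta$, and consider the induction $\psi:=\lambda^{\N_G(\langle g\rangle)}$ from $\langle g\rangle$ up to $N:=\N_G(\langle g\rangle)$. By the induced-character formula, $\psi(g)=\sum_{x\langle g\rangle}\lambda(g^{x})=\sum_{x\langle g\rangle}\lambda(g^{i_x})=\sum_{x\in N/\langle g\rangle}\zeta^{i_x}$, which is exactly $m\cdot\operatorname{tr}(\zeta)$. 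The delicate part is that $\psi$ is a character of $N$, not of $G$, so its values are not \emph{a priori} in $\ZZ[g]=\ZZ[\chi(g):\chi\in\Irr(G)]$. To bridge this, I would induce all the way to $G$: the value $(\lambda^{G})(g)=\sum_{x\langle g\rangle\subseteq G,\ g^{x}\in\langle g\rangle}\lambda(g^{x})$ decomposes, by summing over $N$-cosets and using that $g^{x}\in\langle g\rangle$ forces $x\in N$ (as $\langle g\rangle$ is the unique cyclic subgroup of its order containing $g^{x}$ in the relevant sense), giving $(\lambda^{G})(g)=\lvert G:N\rvert^{-1}$-type corrections — here care is needed — but more robustly, $\lambda^{G}$ is a genuine character of $G$, hence a $\ZZ$-combination of $\Irr(G)$, so $(\lambda^{G})(g)\in\ZZ[g]$ automatically, and one computes $(\lambda^{G})(g)=\sum_{x\in G/\langle g\rangle,\ g^{x}\in\langle g\rangle}\zeta^{i_x}$. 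The remaining work is to show the extra terms beyond the $N$-sum contribute an integer multiple of known character values, so that $\lvert N/\langle g\rangle\rvert\operatorname{tr}(\zeta)$ lands in $\ZZ[g]$; the cleanest route is to verify directly that $\{x\in G:g^{x}\in\langle g\rangle\}=N$, whence $(\lambda^{G})(g)=\sum_{x\in N/\langle g\rangle}\zeta^{i_x}=\lvert N/\langle g\rangle\rvert\operatorname{tr}(\zeta)$ exactly, and since $\lambda^{G}$ is a character of $G$ we conclude $\lvert N/\langle g\rangle\rvert\operatorname{tr}(\zeta)\in\ZZ[g]$ for every root of unity $\zeta$, completing the proof.
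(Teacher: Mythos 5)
Your proposal is correct in substance and follows essentially the same route as the paper: Leopoldt's theorem (\autoref{leopoldt}) reduces everything to traces of roots of unity, \autoref{navarro} identifies $\N_G(\langle g\rangle)/\C_G(g)$ with $\Gal(\QQ_n|\QQ(g))$ where $n:=|\langle g\rangle|$, and the value at $g$ of a character induced from $\langle g\rangle$ to $G$ supplies the required element of $\ZZ[g]$. The one structural difference is \emph{which} character gets induced. The paper takes a character $\psi$ of $\langle g\rangle$ with $\psi(g)=\xi:=\operatorname{tr}(\zeta)$, i.e.\ the sum of the Galois conjugates of your $\lambda$; since $\xi\in\QQ(g)$ is fixed by every $\sigma_x$, all summands $\psi(g^x)$ in the induction formula equal $\xi$, and $(\psi^G)(g)=\lvert\N_G(\langle g\rangle)/\langle g\rangle\rvert\,\xi$ falls out with no multiplicity bookkeeping. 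You induce the single linear character $\lambda$ with $\lambda(g)=\zeta$, so you must count how often each Galois conjugate of $\zeta$ appears in the coset sum, and that is exactly where your writeup goes wrong.

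Your final equality $(\lambda^G)(g)=\lvert\N_G(\langle g\rangle)/\langle g\rangle\rvert\operatorname{tr}(\zeta)$ is false; it even contradicts your own earlier, correct, observation that the sum over $\N_G(\langle g\rangle)/\langle g\rangle$ equals $m\operatorname{tr}(\zeta)$ with $m:=\lvert\C_G(g)/\langle g\rangle\rvert$. (That observation is for primitive $\zeta$; for imprimitive $\zeta$ your claim $\QQ(g)(\zeta)=\QQ_n$ also fails, since Leopoldt's trace runs over $\Gal(\QQ(g)(\zeta)|\QQ(g))$ rather than $\Gal(\QQ_n|\QQ(g))$, and the correct general formula is $(\lambda^G)(g)=m\,[\QQ_n:\QQ(g)(\zeta)]\operatorname{tr}(\zeta)$.) Fortunately the slip is harmless: by the tower formula the true coefficient $m\,[\QQ_n:\QQ(g)(\zeta)]$ divides $\lvert\N_G(\langle g\rangle)/\langle g\rangle\rvert=m\,[\QQ_n:\QQ(g)]$, and $\ZZ[g]$ is closed under multiplication by rational integers, so $(\lambda^G)(g)\in\ZZ[g]$ still yields $\lvert\N_G(\langle g\rangle)/\langle g\rangle\rvert\operatorname{tr}(\zeta)\in\ZZ[g]$, which is all that is needed. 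One more small point: your key set-theoretic identity $\{x\in G:g^x\in\langle g\rangle\}=\N_G(\langle g\rangle)$ is true but your justification is muddled; the clean argument is that $g^x\in\langle g\rangle$ forces $\langle g\rangle^x=\langle g^x\rangle\subseteq\langle g\rangle$, whence equality by comparing orders (the paper uses this silently when it writes the induction formula as a sum over $\N_G(\langle g\rangle)$). With these two repairs your argument is a complete proof; the paper's choice of $\psi$ is simply the cleaner device that makes all multiplicity factors disappear.
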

\begin{proof}
Let $n:=|\langle g\rangle|$ and $K:=\QQ(g)\subseteq\QQ_n$. By \autoref{leopoldt}, $\ZZ_K$ is generated by the traces
\[\xi:=\sum_{\sigma\in\Gal(K(\zeta)|K)}\sigma(\zeta)\]
of $n$-th roots of unity $\zeta$. Let $\psi$ be a character of $\langle g\rangle$ such that $\psi(g)=\xi\in K$. Then by \autoref{navarro} it follows that
\[\ZZ[g]\ni(\psi^G)(g)=\frac{1}{|\langle g\rangle|}\sum_{x\in\N_G(\langle g\rangle)}\psi(g^x)=|\N_G(\langle g\rangle)/\langle g\rangle|\xi.\]
This implies $|\N_G(\langle g\rangle)/\langle g\rangle|\ZZ_K\subseteq\ZZ[g]$.
\end{proof}

The following consequence implies Theorem~A.

\begin{Cor}\label{cor}
For every finite group $G$ there exists $e\in\NN$ such that
\[|G|^e\ZZ_{\QQ(G)}\subseteq\ZZ[G].\]
\end{Cor}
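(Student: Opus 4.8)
The plan is to realise $K=\QQ(G)$ as the compositum of the finitely many column fields $\QQ(g)$ and to combine the local estimate of \autoref{Qg} with a discriminant-based comparison between $\ZZ_K$ and the product of the rings $\ZZ_{\QQ(g)}$. Concretely, let $g_1,\dots,g_r$ be representatives of the conjugacy classes of $G$ and set $L_i:=\QQ(g_i)$. Since character values agree on conjugate elements, $K=L_1\cdots L_r$, and each $L_i$ is a subfield of $\QQ_{\exp(G)}$, hence abelian and in particular Galois over $\QQ$. I would first record that the prime divisors of every discriminant $d_{L_i}$ divide $\exp(G)$, and hence $|G|$: this is immediate from \autoref{disc1}, since $d_{L_i}\mid\exp(G)^{\phi(\exp(G))}$. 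The same applies to every intermediate compositum, so all the discriminants appearing below are integers whose prime factors divide $|G|$, and therefore each divides a suitable power of $|G|$.

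The first main step is to show that $|G|^{e_1}\ZZ_K\subseteq\ZZ_{L_1}\cdots\ZZ_{L_r}$ for some $e_1\in\NN$, where the right-hand side denotes the product (compositum) ring. I would proceed by induction along the partial composita $M_1:=L_1$ and $M_{j+1}:=M_jL_{j+1}$. Both $M_j$ and $L_{j+1}$ are Galois over $\QQ$, so \autoref{disc2} gives $\gcd(d_{M_j},d_{L_{j+1}})\ZZ_{M_{j+1}}\subseteq\ZZ_{M_j}\ZZ_{L_{j+1}}$. As $\gcd(d_{M_j},d_{L_{j+1}})$ divides $d_{M_j}$, this yields $d_{M_j}\ZZ_{M_{j+1}}\subseteq\ZZ_{M_j}\ZZ_{L_{j+1}}$, and since $d_{M_j}\mid|G|^{a_j}$ for some $a_j$ we obtain $|G|^{a_j}\ZZ_{M_{j+1}}\subseteq\ZZ_{M_j}\ZZ_{L_{j+1}}$. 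Multiplying the inductive hypothesis $|G|^{b}\ZZ_{M_j}\subseteq\ZZ_{L_1}\cdots\ZZ_{L_j}$ by $\ZZ_{L_{j+1}}$ and combining with the previous inclusion propagates the estimate to $M_{j+1}$ with exponent $b+a_j$, so after $r-1$ steps we reach the desired containment with $e_1:=\sum_j a_j$.

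The second step is routine. By \autoref{Qg} and since $m_i:=|\N_G(\langle g_i\rangle)/\langle g_i\rangle|$ divides $|G|$, we have $|G|\ZZ_{L_i}\subseteq m_i\ZZ_{L_i}\subseteq\ZZ[g_i]\subseteq\ZZ[G]$ for every $i$. Because $\ZZ[G]$ is a ring containing each $\ZZ[g_i]$, any product $\prod_i x_i$ with $x_i\in\ZZ_{L_i}$ satisfies $|G|^r\prod_i x_i=\prod_i(|G|x_i)\in\ZZ[G]$, and summing shows $|G|^r\,\ZZ_{L_1}\cdots\ZZ_{L_r}\subseteq\ZZ[G]$. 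Combining with the first step gives $|G|^{e_1+r}\ZZ_K\subseteq|G|^r\,\ZZ_{L_1}\cdots\ZZ_{L_r}\subseteq\ZZ[G]$, so $e:=e_1+r$ works.

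The main obstacle is the first step: passing from the ring of integers of the compositum to the product of the individual rings of integers is exactly where ramification can obstruct equality, and controlling it requires the quantitative form of \autoref{disc2} together with the observation that all relevant discriminants are divisible only by primes dividing $|G|$. Once that comparison is in hand, the localisation estimate of \autoref{Qg} and the multiplicativity of $\ZZ[G]$ make the remainder formal.
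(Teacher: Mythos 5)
Your proposal is correct and follows essentially the same route as the paper's proof: write $\QQ(G)$ as the compositum of the column fields $\QQ(g)$, use \autoref{disc1} to see that all relevant discriminants involve only primes dividing $|G|$, iterate \autoref{disc2} to compare $\ZZ_{\QQ(G)}$ with the product of the rings $\ZZ_{\QQ(g)}$, and finish with \autoref{Qg}. The paper compresses your explicit induction along partial composita and the final multiplication step into a single chain of inclusions with an unspecified ``large'' exponent $e$, but the underlying argument is identical.
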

\begin{proof}
Clearly, $\QQ(G)=\prod_{g\in G}\QQ(g)$.
By \autoref{disc1}, the discriminants of the fields $\QQ(g)$ for $g\in G$ divide $|G|^{|G|}$. Hence, \autoref{disc2} and \autoref{Qg} imply
\[|G|^e\ZZ_{\QQ(G)}\subseteq |G|^{|G|}\prod_{g\in G}\ZZ_{\QQ(g)}\subseteq\prod_{g\in G}\ZZ[g]\subseteq\ZZ[G]\]
for some (large) $e\in\NN$.
\end{proof}

For specific groups one can estimate the exponent $e$ in \autoref{cor} by using the full strength of Propositions~\ref{disc1} and \ref{disc2}.
For nilpotent groups $G$ we will prove next that $e$ can be taken to be $1$.

\section{Nilpotent groups}

\begin{Lem}\label{dirprod}
Let $G$ and $H$ be finite groups of coprime order. Let $K:=\QQ(G)$ and $L:=\QQ(H)$. Then $\QQ(G\times H)=KL$, $\ZZ_{KL}=\ZZ_K\ZZ_L$ and $\ZZ[G\times H]=\ZZ[G]\ZZ[H]$.
\end{Lem}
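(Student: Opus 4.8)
The plan is to establish the three claimed equalities in order, exploiting the coprimality of $|G|$ and $|H|$ at each stage. First I would handle $\QQ(G\times H)=KL$. The irreducible characters of a direct product are exactly the products $\chi\times\psi$ with $\chi\in\Irr(G)$ and $\psi\in\Irr(H)$, and such a character evaluates on $(g,h)$ as $\chi(g)\psi(h)$. Hence every character value of $G\times H$ is a product of a character value of $G$ with one of $H$, so $\QQ(G\times H)\subseteq KL$; the reverse inclusion follows since $\chi\times 1_H$ and $1_G\times\psi$ are irreducible characters of $G\times H$, recovering all values $\chi(g)$ and $\psi(h)$ individually. The same observation gives $\ZZ[G\times H]=\ZZ[G]\ZZ[H]$ at the level of the generating sets: the generators $\chi(g)\psi(h)$ of $\ZZ[G\times H]$ are precisely products of generators of $\ZZ[G]$ and $\ZZ[H]$, and conversely $\ZZ[G]$ and $\ZZ[H]$ sit inside $\ZZ[G\times H]$ via the inflated characters. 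This last equality requires no coprimality hypothesis.

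The key remaining claim is $\ZZ_{KL}=\ZZ_K\ZZ_L$, and this is where I would invoke \autoref{disc2}. The strategy is to show that the discriminants $d_K$ and $d_L$ are coprime, which immediately yields $\ZZ_{KL}=\ZZ_K\ZZ_L$ by the final assertion of that proposition. Since $K=\QQ(G)\subseteq\QQ_{\exp(G)}$ and every prime dividing $\exp(G)$ divides $|G|$, \autoref{disc1} shows that every prime divisor of $d_K$ divides $\exp(G)$, hence divides $|G|$; symmetrically every prime divisor of $d_L$ divides $|H|$. As $\gcd(|G|,|H|)=1$, the discriminants $d_K$ and $d_L$ share no common prime factor and are therefore coprime.

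The main obstacle I anticipate is purely a matter of bookkeeping rather than genuine difficulty: one must be careful that $K$ and $L$ are Galois over $\QQ$ so that \autoref{disc2} applies, and that the discriminant divisibility from \autoref{disc1} really does pin the ramified primes down to divisors of $|G|$ and $|H|$ respectively. Both fields are abelian (being subfields of cyclotomic fields), hence Galois, so the hypotheses of \autoref{disc2} are met. The coprimality of the discriminants then does all the work, and the coincidence of the generating sets handles the other two equalities with essentially no computation.
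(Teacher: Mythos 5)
Your proposal is correct and follows essentially the same route as the paper: the identification $\Irr(G\times H)=\Irr(G)\times\Irr(H)$ handles $\QQ(G\times H)=KL$ and $\ZZ[G\times H]=\ZZ[G]\ZZ[H]$, while coprimality of the discriminants $d_K$ and $d_L$ (via \autoref{disc1}) feeds into \autoref{disc2} to give $\ZZ_{KL}=\ZZ_K\ZZ_L$. Your explicit check that $K$ and $L$ are Galois over $\QQ$ is a detail the paper leaves implicit, but otherwise the two arguments coincide.
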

\begin{proof}
Since $\Irr(G\times H)=\Irr(G)\times\Irr(H)$, it is clear that $\QQ(G\times H)=KL$ and 
\[\ZZ[G\times H]=\Bigl\{\sum_{i=1}^nx_iy_i:n\in\NN,x_1,\ldots,x_n\in\ZZ[G],y_1,\ldots,y_n\in\ZZ[H]\Bigr\}=\ZZ[G]\ZZ[H].\]
Since $K\subseteq\QQ_{|G|}$ and $L\subseteq\QQ_{|H|}$, the discriminants $d_K$ and $d_L$ are coprime according to \autoref{disc1}.
By \autoref{disc2}, we obtain $\ZZ_{KL}=\ZZ_K\ZZ_L$.
\end{proof}

In the situation of \autoref{dirprod} it is easy to determine $\ZZ_{KL}/\ZZ[G\times H]$ from the elementary divisors of $\ZZ_K/\ZZ[G]$ and $\ZZ_L/\ZZ[H]$. For instance, if 
$\ZZ_K/\ZZ[G]$ has elementary divisors $1,2,4$ (in particular, $\ZZ_K$ has rank $3$) and $\ZZ_L/\ZZ[L]$ has elementary divisors $1,3$, then 
\[\ZZ_{KL}/\ZZ[G\times H]\cong C_2\times C_4\times C_3\times C_6\times C_{12}\cong C_2\times C_6\times C_{12}^2.\]

The following is a special case of Theorem~B.

\begin{Prop}\label{nilpotentodd}
Let $G$ be a nilpotent group of odd order and let $p_1,\ldots,p_n$ be the prime divisors of $|G|$.
Then
\[|G|\ZZ_{\QQ(G)}\subseteq q\ZZ[G]\]
where $q:=\prod_{i=1}^n\min\{p_i^3,|G|_{p_i}\}$.
\end{Prop}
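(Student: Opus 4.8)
The plan is to reduce to a single Sylow subgroup and then bound the index of $\ZZ[P]$ in $\ZZ_{\QQ(P)}$ locally. Since $G$ is nilpotent it is the internal direct product $P_1\times\cdots\times P_n$ of its Sylow subgroups, with pairwise coprime orders $|P_i|=p_i^{a_i}$. Applying \autoref{dirprod} repeatedly gives $\QQ(G)=\prod_i\QQ(P_i)$, $\ZZ_{\QQ(G)}=\prod_i\ZZ_{\QQ(P_i)}$ and $\ZZ[G]=\prod_i\ZZ[P_i]$. The claimed inclusion factors over the primes: if for each $i$ we have $p_i^{a_i}\ZZ_{\QQ(P_i)}\subseteq\min\{p_i^3,p_i^{a_i}\}\ZZ[P_i]$, then multiplying these inclusions of $\ZZ$-submodules through the ring products yields exactly $|G|\ZZ_{\QQ(G)}\subseteq q\ZZ[G]$. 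So it suffices to prove, for an odd $p$-group $P$ of order $p^a$ and exponent $p^m$, the local statement $p^{\max\{a-3,0\}}\ZZ_K\subseteq\ZZ[P]$, where $K:=\QQ(P)\subseteq\QQ_{p^m}$.

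I would first dispose of $a\le 3$, where the claim reads $\ZZ_K=\ZZ[P]$. Groups of order $p$ or $p^2$ are abelian, so equality holds as recorded in the introduction. For the two nonabelian groups of order $p^3$ one checks that $\QQ(P)=\QQ_p$: the exponent‑$p$ group has no elements of order $p^2$, while for the exponent‑$p^2$ group every nonlinear irreducible character vanishes on the elements of order $p^2$ (a direct Clifford/induction computation). Since the linear characters factor through the elementary abelian quotient $P/P'$ and already generate $\ZZ[\zeta_p]=\ZZ_{\QQ_p}$, we get $\ZZ[P]=\ZZ_{\QQ_p}=\ZZ_K$.

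For $a\ge 4$ the engine is the realization of a generating set of $\ZZ_K$ as character values carrying only a small power of $p$. By \autoref{leopoldt}, $\ZZ_K$ is spanned by traces $\Tr_{K(\zeta)|K}(\zeta)$ of $p^m$-th roots of unity. Given such a $\zeta$ of order $p^j$, I pick $g\in P$ of order $p^j$ and a linear character $\lambda$ of $\langle g\rangle$ with $\lambda(g)=\zeta$; inducing and evaluating via \autoref{navarro}, exactly as in the proof of \autoref{Qg}, expresses $(\lambda^P)(g)\in\ZZ[P]$ as a power of $p$ times a trace of $\zeta$ along $\Gal(\QQ_{p^j}|\QQ(g))$. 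Replacing $\lambda^P$ by an irreducible constituent $\chi\in\Irr(P)$ of least degree lying over $\lambda$ turns the multiplier into $\chi(1)$, and the standard $p$-group estimate $\chi(1)^2\le|P:\Z(P)|\le p^{a-1}$ gives $\chi(1)\le p^{\lfloor(a-1)/2\rfloor}\le p^{a-3}$ for every $a\ge 4$ (linear characters, of multiplier $1$, cover the part of $K$ coming from $P/P'$, and central elements make the Clifford step clean). Thus each column satisfies $p^{e_g}\ZZ_{\QQ(g)}\subseteq\ZZ[P]$ with $e_g\le a-3$.

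It remains to assemble the columns into $K=\prod_g\QQ(g)$, and this is where I expect the main obstacle to lie: controlling the gap between $\sum_g\ZZ_{\QQ(g)}$ and $\ZZ_K$ simultaneously with the multipliers $e_g$, so that the combined exponent of $p$ never exceeds $a-3$. Here the oddness of $|P|$ is essential: $\Gal(\QQ_{p^m}|\QQ)$ is cyclic, so the subfields $\QQ(g)$ form a distributive lattice, a short chain of them already has compositum $K$, and their discriminants are powers of $p$ bounded by \autoref{disc1}. Feeding these into \autoref{disc2} bounds $\ZZ_K/\sum_g\ZZ_{\QQ(g)}$ by an explicit power of $p$, which I would then have to reconcile with the degree bound above to land at the precise constant $p^3$ rather than the crude exponent produced by \autoref{cor}. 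This final bookkeeping—balancing the discriminant gap against the character‑degree multipliers—is the delicate heart of the argument.
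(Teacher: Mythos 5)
Your reduction via \autoref{dirprod} and your treatment of $|P|\le p^3$ agree with the paper, but for $|P|=p^a$ with $a\ge 4$ the two steps that would carry the proof are both missing. (i) The assertion that replacing $\lambda^P$ by an irreducible constituent $\chi$ of least degree ``turns the multiplier into $\chi(1)$'' has no justification for general $g$. Inducing as in \autoref{Qg} gives, with $\zeta=\lambda(g)$,
\[(\lambda^P)(g)=\lvert\C_P(g)/\langle g\rangle\rvert\cdot\Tr_{\QQ_{p^j}|\QQ(g)}(\zeta)\in\ZZ[P],\]
but decomposing $\lambda^P$ into irreducible constituents does not convert the multiplier $\lvert\C_P(g)/\langle g\rangle\rvert$ into a character degree: an individual value $\chi(g)$ need not be any multiple of the trace. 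Clifford theory yields such a statement only when $\langle g\rangle\trianglelefteq P$, and cleanly only when $g$ is central (then $\chi(g)=\chi(1)\zeta^k$ for $\chi$ faithful on $\langle g\rangle$); for non-normal $\langle g\rangle$ you offer no argument. (ii) The ``assembly'' step you postpone cannot be repaired by \autoref{disc2}: by \autoref{disc1} all subfields of $\QQ_{p^m}$ have discriminant a power of $p$, so the correction factor $\gcd(d_K,d_L)/d_{K\cap L}^m$ in \autoref{disc2} is itself a possibly large power of $p$ --- this is exactly why \autoref{cor} only produces $|G|^e$ --- and multiplying the column inclusions $p^{e_g}\ZZ_{\QQ(g)}\subseteq\ZZ[P]$ against one another compounds the exponents instead of keeping them at $a-3$. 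As it stands, your plan proves nothing sharper than \autoref{cor}.

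Both gaps dissolve through an observation you circle around but never state: since $p$ is odd, $\Gal(\QQ_{p^j}|\QQ)$ is cyclic, and by \autoref{navarro} the group $\Gal(\QQ_{|\langle g\rangle|}|\QQ(g))$ is a $p$-group, hence the \emph{unique} subgroup of its order in that cyclic group; its fixed field $\QQ(g)$ is therefore itself a cyclotomic field $\QQ_{p^i}$. Consequently the fields $\QQ(g)$, $g\in P$, are totally ordered by inclusion, so $K:=\QQ(P)=\QQ(g)$ for a \emph{single} element $g$: there is nothing to assemble, and \autoref{leopoldt} is needed for this one column only. Now \autoref{Qg} gives $N\ZZ_K\subseteq\ZZ[P]$ with $N=\lvert\N_P(\langle g\rangle)/\langle g\rangle\rvert$, and if $N\le p^{a-3}$ you are done. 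Otherwise $N\ge p^{a-2}$, and after disposing of $K=\QQ_p$ (where a nontrivial linear character already gives $\ZZ_K\subseteq\ZZ[P]$) this forces $|\langle g\rangle|=p^2$, $\N_P(\langle g\rangle)=\C_P(g)=P$, so $g$ is central and $K=\QQ_{p^2}$ by \autoref{navarro}. This is precisely the situation in which your constituent idea is valid: choosing $\chi\in\Irr(P)$ with $\chi_{\langle g\rangle}$ faithful (it exists because the regular character is faithful) gives $\chi(g)=\chi(1)\zeta^k$ with $p\nmid k$, hence $\chi(1)\ZZ_K\subseteq\ZZ[P]$, and $\chi(1)^2<p^a$ yields $\chi(1)\le p^{\lfloor(a-1)/2\rfloor}\le p^{a-3}$ for $a\ge 4$. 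In short, your skeleton (Leopoldt, \autoref{navarro}, \autoref{Qg}, degree bounds) is the right one, but the proof hinges on the total ordering of the fields $\QQ(g)$ and the ensuing case analysis for one column, not on a column-by-column synthesis.
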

\begin{proof}
We may write $G=P_1\times\ldots\times P_n$ with Sylow subgroups $P_1,\ldots,P_n$.
By \autoref{dirprod}, it follows that
\[|G|\ZZ_{\QQ(G)}=|P_1|\ZZ_{\QQ(P_1)}\ldots|P_n|\ZZ_{\QQ(P_n)}.\]
Thus, we may assume that $G$ is a non-abelian $p$-group for some odd prime $p$. In particular, $|G|\ge p^3$. The Galois group of $\QQ_{|G|}$ (and therefore of every subfield) is cyclic. By \autoref{navarro}, $\Gal(\QQ_{|\langle g\rangle|}|\QQ(g))$ is a cyclic $p$-group for every $g\in G$. 
Hence, the fields $\QQ(g)$ are all cyclotomic and therefore they are totally ordered. In particular, there exists $g\in G$ such that $K:=\QQ(G)=\QQ(g)$. 
By \autoref{Qg}, it follows that $N\ZZ_{K}\subseteq\ZZ[G]$ where $N:=\lvert\N_G(\langle g\rangle)/\langle g\rangle\rvert$.
If $N\le|G|/p^3$, then we are done. So we may assume that $N\ge|G|/p^2$.
If $\QQ(G)=\QQ_p$, then $\ZZ_{K}=\ZZ[\lambda]\subseteq\ZZ[G]$ for any non-trivial linear character $\lambda\in\Irr(G)$. Therefore, we may assume that $|G|\ge p^4$, $|\langle g\rangle|=p^2$ and $\N_G(\langle g\rangle)=\C_G(g)=G$. By \autoref{navarro}, $\QQ(g)=\QQ_{|\langle g\rangle|}=\QQ(\zeta)$ for some root of unity $\zeta$. Since the regular character of $G$ is faithful, there exists $\chi\in\Irr(G)$ such that the restriction $\chi_{\langle g\rangle}$ is faithful. Since $g\in\Z(\chi)$, we have $\chi(g)=\chi(1)\zeta^{k}$ for some integer $k$ coprime to $p$. Then for every $l\ge 0$ we also have $\chi(g^{p^l})=\chi(1)\zeta^{kp^l}$. 
This implies $\chi(1)\ZZ_{K}\subseteq\ZZ[G]$. 
Since $|G|\ge p^4$ and $\chi(1)^2<|G|$, we obtain $|G|\ZZ_{K}\subseteq p^3\ZZ[G]$.
\end{proof}

The analysis of $2$-groups $G$ is more delicate, since it may happen that $\QQ(G)\ne\QQ(g)$ for all $g\in G$. 

\begin{Lem}\label{2fields}
Let $G$ be a $2$-group and $g\in G$ such that $\QQ(g)$ is not a cyclotomic field. Then for every subfield $K$ of $\QQ(g)$ there exists $\chi\in\Irr(G)$ such that $K=\QQ(\chi(g))$.
\end{Lem}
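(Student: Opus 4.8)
The plan is to exploit the rigid subfield structure recorded in \autoref{p2}. Since $\QQ(g)$ is not cyclotomic, \autoref{p2} gives $\QQ(g)=\QQ(\zeta\pm\overline\zeta)$ for a suitable root of unity $\zeta$ of $2$-power order; in particular $\Gal(\QQ(g)|\QQ)$ is cyclic, so the subfields of $\QQ(g)$ are totally ordered, say $\QQ=F_0\subset F_1\subset\dots\subset F_s=\QQ(g)$ with $[F_i:\QQ]=2^i$. The two extreme fields are immediate: $F_0=\QQ=\QQ(1_G(g))$ is realized by the trivial character, and since $\QQ(g)$ is generated by the values $\chi(g)$ while all these values lie in the chain, their compositum---the largest field occurring---must already equal $\QQ(g)$; hence $F_s=\QQ(\chi_0(g))$ for some $\chi_0\in\Irr(G)$. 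The task is therefore to realize each intermediate $F_i$.

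For the intermediate fields I would produce the desired value first on the cyclic group $\langle g\rangle$ and then transport it to $G$ by induction, exactly as in the proof of \autoref{Qg}. By \autoref{p2} the field $F_i$ is generated by an explicit element $\xi_i$ of the shape $\zeta^k+(\pm\overline\zeta)^k$. Choosing a character $\psi$ of $\langle g\rangle$ with $\psi(g)=\xi_i$ and inducing, the computation in \autoref{Qg} (via \autoref{navarro}) yields $(\psi^G)(g)=\lvert\N_G(\langle g\rangle)/\langle g\rangle\rvert\,\xi_i$, an integer multiple of $\xi_i$. Hence $\psi^G$ is a (generally reducible) character of $G$ with $\QQ((\psi^G)(g))=\QQ(\xi_i)=F_i$.

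It remains to replace $\psi^G$ by a single irreducible constituent with the same column field, and this is the heart of the matter. Decomposing $\psi^G=\sum_j\chi_j$ into irreducibles, each $\QQ(\chi_j(g))$ is one of the $F_k$, and because the $F_k$ are totally ordered the field generated by all the values $\chi_j(g)$ equals the largest of them; since this compositum contains $(\psi^G)(g)$, some constituent satisfies $\QQ(\chi_j(g))\supseteq F_i$. The genuine difficulty is to rule out \emph{overshooting}, that is, to guarantee that some constituent hits $F_i$ exactly rather than a strictly larger field whose excess cancels in the sum.

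To control this I would pass to $\N_G(\langle g\rangle)$, where $\langle g\rangle$ is normal and Clifford theory is available. By \autoref{navarro} the action of $\N_G(\langle g\rangle)/\C_G(g)$ on $\Irr(\langle g\rangle)$ is exactly the Galois action fixing $\QQ(g)$, so every irreducible constituent of $\psi^{\N_G(\langle g\rangle)}$ restricts on $\langle g\rangle$ to a sum over a single such orbit; its value at $g$ is therefore a partial Galois trace lying in $F_s=\QQ(g)$, and by arranging $\psi$ to be $\Gal(\QQ_{|\langle g\rangle|}|F_i)$-invariant on $\langle g\rangle$ one forces these values into $F_i$. I expect that tracking the constituent carrying the generator $\xi_i$ through this Clifford correspondence and then through induction to $G$ pins down an irreducible $\chi$ with $\QQ(\chi(g))=F_i$ exactly. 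The main obstacle, and the step demanding the most care, is precisely this no-overshoot argument: ensuring that the extracted irreducible neither collapses $\xi_i$ into a proper subfield nor enlarges it, which is where the explicit generators of \autoref{p2} and the Galois description of \autoref{navarro} must be used in tandem.
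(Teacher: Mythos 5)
Your proposal is incomplete at exactly the point you flag yourself: the ``no-overshoot'' step is never actually carried out, and it is the whole content of the lemma. The reductions you do complete are fine --- realizing $\QQ$ by the trivial character, realizing $\QQ(g)$ itself via total ordering of subfields (\autoref{p2}), and producing a possibly reducible character $\psi^G$ with $\QQ\bigl((\psi^G)(g)\bigr)=F_i$ for each intermediate field $F_i$. Even your Clifford-theoretic step inside $N:=\N_G(\langle g\rangle)$ can be made to work: since $\langle g\rangle\trianglelefteq N$, every irreducible constituent $\theta$ of $\lambda^N$ (with $\lambda(g)=\zeta^{2^j}$) satisfies $\theta(g)=e_\theta\bigl(\zeta^{2^j}+\overline{\zeta}^{2^j}\bigr)$ by \autoref{navarro}, so the intermediate fields are realized by irreducible characters \emph{of $N$}. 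But the lemma demands characters of $G$, and there is no mechanism in your sketch for passing from $\Irr(N)$ to $\Irr(G)$: $N$ need not be normal in $G$, there is no Clifford correspondence to invoke, and the value of an irreducible constituent of $\theta^G$ at $g$ is not determined by $\theta(g)$ --- it involves $\theta$ at all $G$-conjugates of $g$ lying in $N$, and constituents with values in strictly larger fields can cancel in the sum. Saying ``I expect that tracking the constituent \dots pins down an irreducible $\chi$'' is a hope, not an argument, so the proof has a genuine hole at its central step.

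For comparison, the paper avoids the normalizer and induced characters entirely and instead inducts on $|G|$ through a central quotient. Assume $|\QQ(g):\QQ|>2$ (else the chain has length $\le 1$ and the two extreme cases you handled suffice). Pick $\chi\in\Irr(G)$ with $\QQ(\chi(g))=\QQ(g)$, which exists by total ordering, and let $Z\le\Z(G)$ have order $2$. The pointwise square $\chi^2$ is trivial on $Z$, hence a character of $G/Z$, so $\QQ(\chi(g)^2)\subseteq\QQ(gZ)$; since $\chi(g)$ is quadratic over $\QQ(\chi(g)^2)$, this forces $|\QQ(g):\QQ(gZ)|\le 2$. By total ordering every \emph{proper} subfield of $\QQ(g)$ is then a subfield of $\QQ(gZ)$, and $\QQ(gZ)$ is again non-cyclotomic (it is a subfield of $\QQ(g)$ different from $\QQ$), so induction applied to $(G/Z,\,gZ)$ together with $\Irr(G/Z)\subseteq\Irr(G)$ realizes all proper subfields. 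This is the idea your sketch is missing: instead of trying to force intermediate fields upward from $\langle g\rangle$ (where the passage to $\Irr(G)$ loses control), one pushes the whole problem down to a smaller group where the intermediate fields become the top field.
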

\begin{proof}
We argue by induction on $|G|$. We may assume that $|\QQ(g):\QQ|>2$. In particular, $G\ne 1$.
By \autoref{p2}, the subfields of $\QQ(g)$ are totally ordered. In particular, there exists $\chi\in\Irr(G)$ such that $\QQ(\chi(g))=\QQ(g)$. Let $Z$ be a central subgroup of $G$ of order $2$. Then $\chi^2$ is a character of $G/Z$ and $|\QQ(\chi(g)):\QQ(\chi(g)^2)|\le 2$. Since 
\[\QQ(gZ)=\QQ(\psi(gZ):\psi\in\Irr(G/Z))\subseteq\QQ(g),\]
we obtain $|\QQ(g):\QQ(gZ)|\le 2$. Since $|\QQ(g):\QQ|>2$, also $\QQ(gZ)$ is not a cyclotomic field.
By induction, every proper subfield of $\QQ(g)$ has the form $\QQ(\psi(g))$ for some $\psi\in\Irr(G/Z)$.
\end{proof}

The cyclic group $G=\langle g\rangle\cong C_8$ shows the assumption on $\QQ(g)$ in \autoref{2fields} is necessary.

\begin{Lem}\label{lem2}
Let $G$ be a $2$-group and $g\in G$ such that $K:=\QQ(g)$ is not a cyclotomic field. Then
\[M\ZZ_K\subseteq2\ZZ[G]\]
where $M:=\max\{\chi(1):\chi\in\Irr(G)\}$.
\end{Lem}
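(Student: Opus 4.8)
The plan is to prove the equivalent inclusion $\frac{M}{2}\ZZ_K\subseteq\ZZ[G]$ (equivalent since $\frac M2\in\ZZ$ and doubling returns $M\ZZ_K\subseteq2\ZZ[G]$). This is meaningful because $G$ is necessarily non-abelian: an abelian $2$-group has only cyclotomic column fields, so the hypothesis forces $M=\max\{\chi(1):\chi\in\Irr(G)\}$ to be a power of $2$ with $M\ge2$, i.e. $\frac M2\in\NN$. By \autoref{p2} it suffices to handle the abelian-group generators $1$ and $w_k:=\zeta^k+(s\overline{\zeta})^k$ (with $s\in\{+1,-1\}$, $\zeta$ a primitive $2^n$-th root of unity and $k=1,\dots,2^{n-2}-1$) of $\ZZ_K$. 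The generator $1$ is immediate, so everything reduces to showing $\frac M2 w_k\in\ZZ[G]$ for each $k$.

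Two inputs drive the argument. First, Theorem~A (through \autoref{cor}) gives that for a $2$-group the finite abelian group $\ZZ_{\QQ(G)}/\ZZ[G]$ is itself a $2$-group; hence for each $x\in\ZZ_K$ the least $d\in\NN$ with $dx\in\ZZ[G]$ is a power of $2$. Second, an eigenvalue count controls the size of any character value in the $w$-basis: if $\rho$ affords $\chi$ and we write $\chi(g)=c+\sum_k b_k w_k$ with $c,b_k\in\ZZ$, then pairing the eigenvalues of $\rho(g)$ under the involution fixing $K$ (for $s=-1$ under $\zeta\mapsto-\overline{\zeta}$) yields $\sum_k\lvert b_k\rvert\le\tfrac12\chi(1)\le\tfrac M2$.

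These inputs combine cleanly in the base case $K\in\{\QQ(\sqrt{2}),\QQ(\sqrt{-2})\}$ (conductor $8$): by \autoref{2fields} choose $\chi$ with $\QQ(\chi(g))=K$ and write $\chi(g)=a+b\sqrt{\pm2}$ with $b\ne0$ and $\lvert b\rvert\le\tfrac M2$. Then $b\sqrt{\pm2}=\chi(g)-a\in\ZZ[G]$, so the order $d$ of $\sqrt{\pm2}+\ZZ[G]$ divides $b$; being a power of $2$ with $d\le\lvert b\rvert\le\tfrac M2$, it divides $\tfrac M2$, whence $\tfrac M2\sqrt{\pm2}\in\ZZ[G]$. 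For a general field I would induct on $\lvert G\rvert$ as in \autoref{2fields}, fixing a central $Z$ of order $2$. If $\QQ(gZ)=\QQ(g)$, the inductive statement for $G/Z$ (whose maximal degree divides $M$) already gives the claim. Otherwise $\QQ(gZ)$ is the index-$2$ subfield $K'$ of $K$, which for $n\ge4$ is again non-cyclotomic, so induction supplies $\tfrac M2\ZZ_{K'}\subseteq\ZZ[G]$. Choosing $\chi\in\Irr(G)$ with $\QQ(\chi(g))=K$ and splitting $\chi(g)$ into its $K'$-part and its top part $\beta:=\sum_{k\ \mathrm{odd}}b_kw_k$, the inductive control of $\ZZ_{K'}$ lets me cancel the $K'$-part after multiplying by $\tfrac M2$, giving $\tfrac M2\beta\in\ZZ[G]$, and likewise $\tfrac M2\sigma(\beta)\in\ZZ[G]$ for every $\sigma\in\Gal(K|\QQ)$ because the conjugates $\chi(g^j)$ are again character values.

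The hard part will be the final extraction. As soon as $n\ge4$ the top layer $\bigoplus_{k\ \mathrm{odd}}\ZZ w_k$ has rank exceeding $1$, so a single character value only produces the mixed element $\beta$, not a pure multiple of an individual $w_{k_0}$. The crux is to show that the sublattice generated by the Galois conjugates $\sigma(\beta)$—which does span the top layer over $\QQ$, since otherwise $\QQ(\chi(g))$ would be a proper subfield of $K$—combined with the bound $\sum_k\lvert b_k\rvert\le\tfrac M2$ and the fact that all relevant orders are powers of $2$, forces the order of each $w_{k_0}+\ZZ[G]$ to be at most $\tfrac M2$; this amounts to a controlled statement about the permutation-module structure of the top layer under $\Gal(K|\QQ)$. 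A final routine point is the case where $\lvert g\rvert$ exceeds the conductor of $K$, handled by first descending to the power of $g$ that generates $K$.
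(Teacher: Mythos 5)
Your preparatory steps are sound: the reduction via \autoref{p2} to the generators $1$ and $w_k$, the observation via \autoref{cor} that every order in $\ZZ_{\QQ(G)}/\ZZ[G]$ is a $2$-power, the eigenvalue-pairing bound $\sum_k\lvert b_k\rvert\le\chi(1)/2$ (which does follow from \autoref{navarro}, since the multiset of eigenvalues of a matrix representing $g$ is stable under $\Gal(\QQ_{\lvert\langle g\rangle\rvert}\,|\,\QQ(g))$), and the conductor-$8$ base case are all correct. But the base case succeeds only because the lattice there has rank one, so a single character value already produces a pure multiple of $\sqrt{\pm2}$. For $n\ge 4$ you leave unproved exactly the step you yourself call the crux: passing from the mixed elements $\tfrac{M}{2}\sigma(\beta)$, $\beta=\sum_{k\text{ odd}}b_kw_k$, to pure multiples $\tfrac{M}{2}w_{k_0}\in\ZZ[G]$. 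This is not a routine verification to be filled in later; it is the entire content of the lemma, and the paper's proof spends essentially all of its effort there: it identifies the top layer $\bigoplus_{k\text{ odd}}\ZZ w_k$ with $\ZZ_{\QQ_{2s}}$, $s=2^{n-3}$, via the action of $\tau:\zeta\mapsto\zeta^5$, exploits the total ramification $(2)=(\zeta^4-1)^s$ to produce $\delta$ with $\gamma\delta=2em$ ($\gamma$ the coefficient vector, $e$ the $2$-part of the coefficient gcd, $m$ odd), converts this into the resolvent identity $\sum_jc_j\tau^j(\omega)=2em\xi$, and then still needs a case analysis of when the inequalities in \eqref{eq1} become equalities, because the factor $2e$ obtained so far is not small enough on its own.

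Worse, your particular framing provably cannot be closed by the extraction you describe. Since $e$ divides every coefficient $b_k$, every element of the sublattice generated by the conjugates $\sigma(\beta)$ has all of its coordinates divisible by $e$; hence if $t\,w_{k_0}$ lies in that sublattice then $e\mid t$, and from $\tfrac{M}{2}\sigma(\beta)\in\ZZ[G]$ you can never conclude anything better than $\tfrac{Me}{2}w_{k_0}\in\ZZ[G]$. Nothing in your hypotheses forces $e=1$, so in general this misses the target by the factor $e$. The paper escapes this trap in two ways that have no counterpart in your proposal: first, it combines Galois conjugates of an element lying in $\ZZ[G]$ with no multiplier at all, namely $\omega=\chi(g)-\sigma_0(\chi(g))=2\beta$ where $\sigma_0$ restricts to the generator of $\Gal(K|K')$ (note this gives $2\beta\in\ZZ[G]$ for free, so for $M\ge4$ your inductive conclusion $\tfrac{M}{2}\beta\in\ZZ[G]$ contributes nothing on the top layer; the induction helps only with the even-index generators in $K'$, which the paper instead handles by running its argument inside each subfield $\QQ(\xi_k)$); second, when even this yields only $2e\xi\in\ZZ[G]$ with $e$ too large relative to $d=\chi(1)$, the paper's equality-case analysis shows that such an $e$ forces degenerate coefficient patterns ($\omega$ a single term, or $\omega=2\chi(g)$) for which a sharper extraction is available. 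Without an argument of this kind, the gap between what your lattice of conjugates can give and what the lemma asserts remains open.
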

\begin{proof}
By \autoref{p2}, there exists a primitive $2^n$-th root of unity $\zeta$ such that $K=\QQ(\zeta\pm\overline{\zeta})$. Moreover, $\ZZ_K$ is generated by the elements $1$ and $\xi_k:=\zeta^k+(\pm\overline{\zeta})^k$ with $k=1,\ldots,2^{n-2}-1$. For every such $k$ there exists $\chi\in\Irr(G)$ such that $\QQ(\chi(g))=\QQ(\xi_k)$ by \autoref{2fields}. It suffices to show that $\chi(1)\xi_k$ is an integral linear combination of the Galois conjugates of $2\chi(g)$. To this end, we may assume that $k=1$ and $\xi:=\xi_1$. 

Let $d:=\chi(1)$ and note that $d>1$ since $\QQ(\chi(g))=\QQ(\xi)=K$ is not a cyclotomic field.
There exist integers $a_0,\ldots,a_{2^{n-1}-1}$ such that
\[\chi(g)=\sum_{i=0}^{2^{n-1}-1}a_i\zeta^i=a_0+\sum_{i=1}^{2^{n-2}-1}a_i\xi_i.\]
Since $\chi(g)$ is a sum of $d$ roots of unity, $|a_0|+\ldots+|a_{2^{n-1}-1}|\le d$ (it may happen that other roots, even of higher order than $2^n$, cancel each other out). The Galois group $\mathcal{G}$ of $\QQ_{2^n}$ acts on $K$ and on $\{\psi(g):\psi\in\Irr(G)\}$.  
Let $\sigma\in\mathcal{G}$ such that $\sigma(\zeta)=\zeta^{1+2^{n-1}}=-\zeta$. Then 
\[\omega:=\sum_{i=0}^{s-1}b_i\xi_{2i+1}=\chi(g)-\sigma(\chi(g))\in\ZZ[G]\]
where $s:=2^{n-3}$ and $b_i:=2a_{2i+1}$ for $i=0,\ldots,s-1$. 
Let $\tau\in\mathcal{G}$ such that $\tau(\zeta)=\zeta^5$. Note that $\tau^s(\xi)=\sigma(\xi)=-\xi$. We may relabel the elements $b_i$ in a suitable order such that
\[\omega=\sum_{i=0}^{s-1}b_i\tau^i(\xi).\]
Next we consider 
\[\gamma:=\sum_{i=0}^{s-1}b_i\zeta^{4i}\in\ZZ_{\QQ_{2s}}.\]
It is known that the prime $2$ is fully ramified in $\QQ_{2s}$. More precisely, $(2)=(\zeta^4-1)^s$ and $(\zeta^4-1)$ is a prime ideal (see \cite[Lemma~I.10.1]{NeukirchE}). Let $e$ be the $2$-part of $\gcd(b_0,\ldots,b_{s-1})$. Then $\frac{1}{e}\gamma$ is an algebraic integer, but $\frac{1}{2e}\gamma$ is not. Hence, 
$(\frac{1}{e}\gamma)=(\zeta^4-1)^t\mathfrak{p}$ where $t<s$ and $\mathfrak{p}$ is an ideal of $\ZZ_{\QQ_{2s}}$ coprime to $(\zeta^4-1)$. This implies the existence of some $\delta\in\ZZ_{\QQ_{2s}}$ such that $\gamma\delta=2em$ where $m$ is an odd integer. We write $\delta=\sum_{i=0}^{s-1}c_i\zeta^{4i}$ with $c_0,\ldots,c_{s-1}\in\ZZ$. 
Then
\[2em=\gamma\delta=\sum_{i,j=0}^{s-1}b_ic_j\zeta^{4(i+j)}.\]
Comparing coefficients yields
\[\sum_{i+j=t}b_ic_j-\sum_{i+j=s-t}b_ic_j=\begin{cases}
2em&\text{if }t=0,\\
0&\text{if }1\le t\le s-1.
\end{cases}\]
Finally we compute
\begin{align*}
\sum_{j=0}^{s-1}c_j\tau^j(\omega)=\sum_{i,j=0}^{s-1}b_ic_j\tau^{i+j}(\xi)=\sum_{t=0}^{s-1}\Bigl(\sum_{i+j=t}b_ic_j-\sum_{i+j=s-t}b_ic_j\Bigr)\tau^t(\xi)=2em\xi.
\end{align*}
Hence, $2em\xi\in\ZZ[G]$. By \autoref{Qg} we also have $|G|\xi\in|G|\ZZ_{\QQ(g)}\subseteq\ZZ[G]$. Therefore, \[2e\xi=\gcd(2em,|G|)\xi\in\ZZ[G].\]
Note that
\begin{equation}\label{eq1}
e\le\sum_{i=0}^{s-1}|b_i|=\sum_{i=0}^{2^{n-2}-1}|a_{2i+1}|\le\sum_{i=0}^{2^{n-1}-1}|a_i|\le d.
\end{equation}
Suppose that $d\xi\notin 2\ZZ[G]$. Then $d\le 2e$ (keep in mind that $d$ and $e$ are $2$-powers). 
If the first inequality in \eqref{eq1} is strict, then $2e\le \sum_{i=0}^{s-1}|b_i|$ since the right hand side is divisible by $e$. Thus, in any case one of the inequalities in \eqref{eq1} is an equality. 
If $e=\sum_{i=0}^{s-1}|b_i|$, then $e=|b_i|$ and $\omega=b_i\tau^i(\xi)$ for some $i\in\{0,\ldots,s-1\}$. Then we obtain $e\xi\in\ZZ[G]$.
If, on the other hand, $\sum_{i=0}^{2^{n-2}-1}|a_{2i+1}|=\sum_{i=0}^{2^{n-1}-1}|a_i|$, then $\omega=2\chi(g)$ and $e\xi\in\ZZ[G]$ by the computation above. Hence in any case we deduce that $d=e$. But now $\chi(g)=a_{2i+1}\tau^i(\xi)$ and $d=2|a_{2i+1}|$. This implies $d\xi\in 2\ZZ[G]$ as desired. 
\end{proof}

The next result is a restatement of Theorem~B. 

\begin{Thm}
For every nilpotent group $G\ne 1$ the exponent of $\ZZ_{\QQ(G)}/\ZZ[G]$ is a proper divisor of $|G|$. 
\end{Thm}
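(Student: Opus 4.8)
The plan is to reduce to the Sylow subgroups and then handle odd $p$-groups and $2$-groups separately. Since $G$ is nilpotent, I write $G=P_1\times\dots\times P_n$ with $P_i\in\Syl_{p_i}(G)$. Iterating \autoref{dirprod} gives $\ZZ_{\QQ(G)}=\prod_i\ZZ_{\QQ(P_i)}$ and $\ZZ[G]=\prod_i\ZZ[P_i]$. Hence if for each $i$ I can produce $c_i\mid|P_i|$ with $c_i\ZZ_{\QQ(P_i)}\subseteq\ZZ[P_i]$, then multiplying out yields $\bigl(\prod_ic_i\bigr)\ZZ_{\QQ(G)}\subseteq\ZZ[G]$, so the exponent of $\ZZ_{\QQ(G)}/\ZZ[G]$ divides $\prod_ic_i$, which in turn divides $|G|$. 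To turn this into a \emph{proper} divisor it suffices to have $c_i<|P_i|$ for at least one $i$; as $G\ne1$ some $P_i\ne1$, so the theorem follows once I show $c_i<|P_i|$ whenever $P_i\ne1$. For abelian $P_i$ we already know $\ZZ[P_i]=\ZZ_{\QQ(P_i)}$, i.e.\ $c_i=1<|P_i|$, so everything reduces to non-abelian $p$-groups.

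For odd $p$ this is exactly \autoref{nilpotentodd}, which gives $c_i=|P_i|/\min\{p_i^3,|P_i|\}=|P_i|/p_i^3<|P_i|$, using that a non-abelian $p$-group has order at least $p^3$. Thus the only genuine work is a non-abelian $2$-group $P$, where I want $\tfrac{|P|}2\ZZ_K\subseteq\ZZ[P]$ with $K:=\QQ(P)$, i.e.\ $c=|P|/2<|P|$. Writing $K\subseteq\QQ_{2^n}$, \autoref{p2} tells me $K=\QQ(\xi)$ with $\xi\in\{\zeta,\zeta\pm\overline\zeta\}$, and I would split on whether $K$ is cyclotomic. If $K=\QQ(\zeta\pm\overline\zeta)$ is non-cyclotomic, I first note that such a field has cyclic Galois group over $\QQ$ (the involutions $\alpha,\beta$ of $\Gal(\QQ_{2^n}|\QQ)$ yield cyclic quotients $\cong C_{2^{n-2}}$), so its subfields are totally ordered; since $K$ is the compositum of the column fields $\QQ(g)$, it must equal one of them, say $K=\QQ(g)$. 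Then \autoref{lem2} gives $M\ZZ_K\subseteq2\ZZ[P]$ with $M=\max_\chi\chi(1)$, and since $M^2<|P|$ with $M$ a power of $2$ we get $M\mid|P|/2$, whence $\tfrac{|P|}2\ZZ_K\subseteq\ZZ[P]$. If $K=\QQ_{2^m}$ is cyclotomic and some column already has $\QQ(g)=K$, then \autoref{Qg} gives $N\ZZ_K\subseteq\ZZ[g]\subseteq\ZZ[P]$ with $N=|\N_P(\langle g\rangle)/\langle g\rangle|\le|P|/|\langle g\rangle|\le|P|/2$, again a $2$-power dividing $|P|/2$.

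The remaining case---$K=\QQ_{2^m}$ cyclotomic while $\QQ(g)\subsetneq K$ for every $g$---is the one I expect to be the main obstacle, and it is precisely the phenomenon flagged before \autoref{2fields}. Here the compositum of the column fields reaches the top of the lattice of \autoref{p2} only by combining at least two of the three maximal subfields $\QQ_{2^{m-1}}$, $\QQ(\zeta+\overline\zeta)$, $\QQ(\zeta-\overline\zeta)$; since the cyclotomic columns all stay inside $\QQ_{2^{m-1}}$, at least one column must equal a maximal non-cyclotomic field $\QQ(\zeta\pm\overline\zeta)$. My plan is to exploit the factor $2$ saved by \autoref{lem2} on this column, together with \autoref{Qg} applied to a second column reaching another maximal subfield, and then to rebuild the generators $\zeta^k$ of $\ZZ_K=\ZZ[\zeta]$ from real and imaginary parts via $2\zeta^k=(\zeta^k+\overline\zeta^k)+(\zeta^k-\overline\zeta^k)$, the first summand being controlled by the real column and the second by the imaginary one for odd $k$, while even $k$ descends into $\QQ_{2^{m-1}}$ and is treated by induction along the cyclotomic chain. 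Arranging the bookkeeping so that it yields exactly the factor $|P|/2$ rather than a weaker power of $2$ is the delicate point, and is where the sharp constant in \autoref{lem2} and the explicit integral bases from \autoref{p2} must be combined carefully.
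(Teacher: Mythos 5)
Your reduction to Sylow subgroups, your treatment of odd $p$-groups via \autoref{nilpotentodd}, and your two easy $2$-group cases (where $\QQ(G)$ is non-cyclotomic, or is cyclotomic and equals some column field $\QQ(g)$) are correct and run along the same lines as the paper. The problem is the case you defer: $\QQ(G)=\QQ(\zeta)=\QQ_{2^n}$ with $n\ge3$ and $\QQ(g)\subsetneq\QQ(G)$ for every $g\in G$. This is not a residual technicality; it is the case the whole section is built to handle (the paper notes before \autoref{2fields} that it genuinely occurs), and your sketch for it has a real gap. You want to control $\zeta^k-\overline{\zeta}^k$ by ``a second column reaching another maximal subfield'', i.e.\ by a column equal to the other field $\QQ(\zeta\mp\overline{\zeta})$. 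Nothing guarantees such a column exists: your argument only produces \emph{one} column equal to $\QQ(\zeta+\overline{\zeta})$ or $\QQ(\zeta-\overline{\zeta})$, and the compositum can perfectly well be attained by, say, one column $\QQ(\zeta+\overline{\zeta})$ together with a column $\QQ_{2^{n-1}}$, since the corresponding Galois subgroups $\langle\alpha\rangle$ and $\langle\gamma\rangle$ from the proof of \autoref{p2} intersect trivially. Moreover, even if both maximal non-cyclotomic columns were present, your recursion for even $k$ does not close: $\zeta^2-\overline{\zeta}^2$ generates $\QQ(\zeta^2-\overline{\zeta}^2)$, which by \autoref{p2} is contained in neither $\QQ(\zeta+\overline{\zeta})$ nor $\QQ(\zeta-\overline{\zeta})$, so none of your columns controls it, and there is no smaller group available on which to run the proposed induction.

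The paper closes this case with an idea your proposal is missing: replace the possibly nonexistent ``imaginary column'' by an antisymmetrized character value. Since $\QQ(G)\ne K:=\QQ(g)=\QQ(\zeta\pm\overline{\zeta})$, there exist $h\in G$ and $\psi\in\Irr(G)$ with $\psi(h)\notin K$; letting $\sigma$ be the automorphism of $\QQ(\zeta)$ with $\sigma(\zeta)=\pm\overline{\zeta}$, the element $\omega:=\psi(h)-\sigma(\psi(h))$ lies in $\ZZ[G]$ (Galois conjugates of character values are character values) and satisfies $\sigma(\omega)=-\omega$, so that $(\zeta^k-\sigma(\zeta)^k)/\omega$ lies in $K$ for every $k$. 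A ramification argument as in the proof of \autoref{lem2} produces a $2$-power $e$ and an odd integer $m$ with $2em(\zeta^k-\sigma(\zeta)^k)/\omega\in\ZZ_K$; combined with $M\ZZ_K\subseteq2\ZZ[G]$ from \autoref{lem2} this gives $2emM\zeta^k\in\ZZ[G]$, and taking a $\gcd$ with $|G|^s$ (via \autoref{cor}) removes $m$ and yields $2eM\ZZ_{\QQ(G)}\subseteq\ZZ[G]$. What remains is exactly the bookkeeping you acknowledge you have not done, and it is where most of the work lies: one proves $e\le\psi(1)$ (with a separate treatment when $\psi(h)$ is purely imaginary), and then the \emph{strict} inequality $2M\psi(1)<|G|$, which uses $|G|=\sum_{\chi}\chi(1)^2$ together with the observation that if $\psi$ were the unique character of maximal degree $M$ it would be rational, contradicting $\psi(h)\notin K$. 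Without the element $\omega$ and without these bounds, your argument establishes the theorem only in the easy configurations, so as it stands the proof is incomplete.
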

\begin{proof}
By \autoref{nilpotentodd} and its proof, we may assume that $G$ is a $2$-group.
By \autoref{p2}, $\QQ(G)=\QQ(\xi)$ where $\xi\in\{\zeta,\zeta\pm\overline{\zeta}\}$ and $\zeta$ is a primitive $2^n$-th root of unity. 
If there exists $g\in G$ such that $\QQ(G)=\QQ(g)$, then we obtain $|G|\ZZ_{\QQ(G)}\subseteq\ZZ[G]$ from \autoref{Qg}.
Otherwise we have $n\ge 3$, $\QQ(G)=\QQ(\zeta)$ and there exists $g\in G$ such that $K:=\QQ(g)=\QQ(\zeta\pm\overline{\zeta})$. Moreover, there exist $h\in G$ and $\psi\in\Irr(G)$ such that 
\[\psi(h)=\sum_{i=0}^{2^{n-1}-1}a_i\zeta^i\notin K\]
where $a_0,\ldots,a_{2^{n-1}-1}\in\ZZ$.
\autoref{lem2} shows that $M\ZZ_K\subseteq2\ZZ[G]$ where $M:=\max\{\chi(1):\chi\in\Irr(G)\}$. It suffices to prove $|G|\zeta^k\in2\ZZ[G]$ for every $k\in\ZZ$.

Let $\sigma$ be the Galois automorphism of $\QQ(\zeta)$ such that $\sigma(\zeta)=\pm\overline{\zeta}$. 
Since $\psi(h)\notin K$, we have $\psi(h)\ne\sigma(\psi(h))$. 
We consider
\[\omega:=\psi(h)-\sigma(\psi(h))=\sum_{i=1}^{2^{n-1}-1}b_i\zeta^i\in\ZZ[G]\]
where $b_i:=a_i\pm a_{2^{n-1}-i}$ if $i$ is odd and $b_i:=a_i+a_{2^{n-1}-i}$ otherwise. Let $e$ be the $2$-part of $\gcd(b_0,\ldots,b_{2^{n-1}-1})$. As in the proof of \autoref{lem2} there exists an odd integer $m$ such that $2em\omega^{-1}$ is an algebraic integer. Hence for every $k\in\ZZ$,
\[2em\frac{\zeta^k-\sigma(\zeta)^k}{\omega}\in\ZZ_{\QQ(\zeta)}\cap\QQ(\zeta)^\sigma=\ZZ_K.\]
We conclude that
\[2emM\zeta^k=emM(\zeta^k+\sigma(\zeta)^k)+emM\frac{\zeta^k-\sigma(\zeta)^k}{\omega}\omega\in\ZZ[G].\]
By \autoref{cor}, there exists $s\in\NN$ such that $|G|^s\zeta^k\in\ZZ[G]$. Hence,
\[2eM\ZZ_{\QQ(G)}\subseteq\gcd(2emM,|G|^s)\ZZ[\zeta]\subseteq\ZZ[G].\]
If $b_i\ne 0$ for some $i\ne 2^{n-2}$, then $e\le |b_i|\le |a_i|+|a_{2^{n-1}-1}|\le\psi(1)$. Otherwise, $\omega=2a_{2^{n-2}}\sqrt{-1}$. If, in this case, there exists some $a_i\ne 0$ with $i\ne2^{n-2}$, then $e\le|b_{2^{n-2}}|<2|a_{2^{n-2}}|+|a_i|\le 2\psi(1)$. Since $e$ and $\psi(1)$ are $2$-powers, we still have $e\le\psi(1)$. Finally, let $\psi(h)=a_{2^{n-2}}\sqrt{-1}=\omega/2$. Then we may repeat the calculation above with $\psi(h)$ instead of $\omega$ in order to obtain $eM\ZZ_{\QQ(G)}\subseteq\ZZ[G]$ where $e\le2\psi(1)$. In summary, 
\[2M\psi(1)\ZZ_{\QQ(G)}\subseteq\ZZ[G]\]
in every case. Since $|G|=\sum_{\chi\in\Irr(G)}\chi(1)^2$, we have $2M\psi(1)\le 2M^2\le|G|$. If $2M\psi(1)=|G|$, then $\psi(1)=M$ and $\psi$ is the only irreducible character of degree $M$. But then $\psi$ is rational and we derive the contradiction $\psi(h)\in K$. Therefore, $2M\psi(1)<|G|$ and the claim follows.
\end{proof}

\section{Examples}

We show first that \autoref{nilpotentodd} is sharp in the following sense.

\begin{Prop}\label{extra}
For every prime $p$ and every integer $n\ge 1$ there exists a group $P$ of order $p^{2n+2}$ and exponent $p^2$ such that $K:=\QQ(P)=\QQ_{p^2}$ and
$\ZZ_K/\ZZ[P]\cong C_{p^n}^{(p-1)^2}$.
\end{Prop}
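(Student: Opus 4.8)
The plan is to realise $P$ as a central product $P=E\circ C_{p^2}$, where $E$ is an extraspecial group of order $p^{2n+1}$ and the central subgroup $\Z(E)\cong C_p$ is identified with the unique subgroup of order $p$ inside $C_{p^2}=\langle z\rangle$ (for odd $p$ one may take $E$ of exponent $p$; for $p=2$ an extraspecial $2$-group, necessarily of exponent $4$, works equally well). First I would record the structural data: $|P|=p^{2n+1}\cdot p^2/p=p^{2n+2}$, the centre is $\Z(P)=\langle z\rangle\cong C_{p^2}$, the derived subgroup is $P'=\Z(E)=\langle z^p\rangle\cong C_p$, and $P/P'\cong(E/\Z(E))\times(C_{p^2}/\langle z^p\rangle)\cong C_p^{2n+1}$ is elementary abelian. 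Since $C_{p^2}$ is central, every element factors as $ec$ with $(ec)^{p^2}=e^{p^2}c^{p^2}=1$, so $\exp(P)=p^2$.

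Next I would analyse $\Irr(P)$ through the character theory of central products: irreducible characters correspond to pairs $(\theta,\nu)$ with $\theta\in\Irr(E)$ and $\nu\in\Irr(C_{p^2})$ agreeing on $\langle z^p\rangle$. The $p^{2n}$ linear characters of $E$ pair with the $p$ characters of $C_{p^2}$ trivial on $z^p$, giving $p^{2n+1}$ linear characters of $P$; these factor through $P/P'\cong C_p^{2n+1}$, hence take values in $\ZZ[\zeta_p]=\ZZ[\zeta^p]$, and some such character sends $z\mapsto\zeta_p$, so together they generate exactly $\ZZ[\zeta_p]$. The $p-1$ faithful characters $\theta$ of $E$, each of degree $p^n$, pair with the $p$ extensions $\nu$ of the corresponding faithful character of $\langle z^p\rangle$, giving $p(p-1)$ faithful characters of $P$ of degree $p^n$; each vanishes off $\Z(P)$ and satisfies $\Theta(z^j)=p^n\nu(z)^j=p^n\zeta^{jk}$ with $p\nmid k$, where $\zeta:=\zeta_{p^2}$. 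As $j,k$ vary these span $p^n\ZZ[\zeta]$. From this I would conclude $\QQ(P)=\QQ_{p^2}$ (so $K=\QQ_{p^2}$ and $\ZZ_K=\ZZ[\zeta]$) and, crucially, that $\ZZ[P]=\ZZ[\zeta_p]+p^n\ZZ_K$.

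Finally I would compute the quotient purely algebraically. The key point is that $\zeta^p=\zeta_p$ and $X^p-\zeta_p$ is the minimal polynomial of $\zeta$ over $\QQ_p$ (its degree equals $|\QQ_{p^2}:\QQ_p|=p$), whence $\ZZ_K=\ZZ[\zeta_p][\zeta]$ is a \emph{free} $\ZZ[\zeta_p]$-module with basis $1,\zeta,\ldots,\zeta^{p-1}$. Writing $A:=\ZZ[\zeta_p]$ and $\ZZ_K=\bigoplus_{i=0}^{p-1}A\zeta^i$, we get $\ZZ[P]=A+p^n\ZZ_K=A\zeta^0\oplus\bigoplus_{i=1}^{p-1}p^nA\zeta^i$, so
\[
\ZZ_K/\ZZ[P]\cong\bigoplus_{i=1}^{p-1}(A/p^nA)\zeta^i\cong(A/p^nA)^{p-1}\cong C_{p^n}^{(p-1)^2},
\]
using $A/p^nA\cong(\ZZ/p^n\ZZ)^{\phi(p)}=C_{p^n}^{p-1}$. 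I expect the main obstacle to be the second paragraph: pinning down the faithful character values precisely (that they vanish off the centre and realise a primitive $p^2$-th root of unity) and checking that \emph{no} character contributes anything beyond $\ZZ[\zeta_p]+p^n\ZZ_K$, so that $\ZZ[P]$ is determined exactly rather than merely bounded. The relative integral basis $1,\ldots,\zeta^{p-1}$ in the last step is the clean structural fact that makes the elementary-divisor computation transparent and sidesteps any question of whether $\ZZ[\zeta_p]$ is a principal ideal domain.
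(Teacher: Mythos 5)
Your proposal is correct and takes essentially the same route as the paper: the same central product $P=E\circ C_{p^2}$ of an extraspecial group with a cyclic group of order $p^2$, the same classification of $\Irr(P)$ via pairs of characters agreeing on the identified central subgroup, and the same resulting identification $\ZZ[P]=\ZZ[\zeta_p]+p^n\ZZ_K$. The only difference is cosmetic: the paper finishes with the absolute $\ZZ$-basis $1,\zeta,\ldots,\zeta^{p(p-1)-1}$ of $\ZZ_K$ and leaves the elementary-divisor count implicit (``the claim follows easily''), whereas your relative $\ZZ[\zeta_p]$-basis $1,\zeta,\ldots,\zeta^{p-1}$ makes that count explicit.
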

\begin{proof}
Let $P$ be the central product of an extraspecial group $E$ of order $p^{2n+1}$ (it does not matter which one) and a cyclic group $C=\langle c\rangle$ of order $p^2$. The irreducible characters of $P$ are those of $E\times C$ which agree on $\Z(E)=\langle z\rangle$ and $\langle c^p\rangle$. It is well-known that $\Irr(E)$ consists of $p^{2n}$ linear character and $p-1$ faithful characters $\chi_1,\ldots,\chi_{p-1}$ of degree $p^n$ (see \cite[Example~7.6(b)]{HuppertChar} for instance). Since $E/E'$ is elementary abelian, the linear character values of $E$ and also of $P$ generate $\QQ_p$. 
Let $\zeta$ be a primitive $p^2$-th root of unity. After relabeling, we may assume that $\chi_i(z)=p^n\zeta^{ip}$ and $\chi_i(g)=0$ for $g\in E\setminus\Z(E)$ and $i=1,\ldots,p-1$. Hence, the non-linear character of $P$ take the values $0$ and $p^n\zeta^i$ for $i\in\ZZ$. This shows $K=\QQ_{p^2}$ and 
\[\ZZ[G]=\ZZ[\zeta^p,p^n\zeta^i:\gcd(i,p)=1].\] 
Since the elements $1,\zeta,\zeta^2,\ldots,\zeta^{p(p-1)-1}$ form a $\ZZ$-basis of $\ZZ_K$, the claim follows easily.
\end{proof}

\autoref{extra} already shows that neither $|\langle g\rangle|\ZZ_{\QQ(g)}\subseteq\ZZ[G]$ nor
$\exp(G)\ZZ_{\QQ(G)}\subseteq\ZZ[G]$ is true in general. Also the dual statements, motivated by \autoref{lem2}, $\chi(1)\ZZ_{\QQ(\chi)}\subseteq\ZZ[G]$ and 
\[\lcm\{\chi(1):\chi\in\Irr(G)\}\ZZ_{\QQ(G)}\subseteq\ZZ[G]\] 
do not always hold. 
Using GAP~\cite{GAP48} and MAGMA~\cite{Magma} we computed the following example: 
The group 
\[G=\texttt{SmallGroup}(48,3)\cong C_4^2\rtimes C_3\] 
gives $K:=\QQ(G)=\QQ_{12}$ and $\ZZ[G]=\ZZ[2\sqrt{-1},\zeta]$ where $\zeta$ is a primitive third root of unity. Hence, $\ZZ_K/\ZZ[G]\cong C_2^2$, but $\lcm\{\chi(1):\chi\in\Irr(G)\}=3$. 

For a single entry $\omega=\chi(g)$ of the character table of $G$ the group $\ZZ_{\QQ(\omega)}/\ZZ[\omega]$ usually has nothing to do with $G$. 
For instance, $G=D_{26}\times C_3$ has a character value $\omega$ such that $\ZZ_{\QQ(\omega)}/\ZZ[\omega]$ is cyclic of order $5^2\cdot157\cdot547$. It is not hard to show that every algebraic integer of an abelian number field occurs in the character table of some finite group (see proof of \cite[Theorem~6]{FeinGordon}).

For $2$-groups the gap between $G$ and $\ZZ_K/\ZZ[G]$ can get even bigger than in \autoref{extra}:
The exponent and the largest character degree of $G=\texttt{SmallGroup}(2^9,6480850)$ is $8$, but
\[\ZZ_K/\ZZ[G]\cong C_{64}\times C_8\times C_4.\]
Similarly, the group $G=\texttt{SmallGroup}(2^9,60860)$ yields $|\ZZ_K/\ZZ[G]|=2^{33}$.

For non-nilpotent groups, the arguments from the last section drastically fail as our next example shows. Let 
\[G=\texttt{SmallGroup}(240,13)\cong C_{15}\rtimes D_{16}\]
where the dihedral group $D_{16}$ acts with kernel $D_{16}'$ (commutator subgroup) on $C_{15}$.
Then $K=\QQ_{120}$ and $2\ZZ_{\QQ(g)}\subseteq\ZZ[G]$ for all $g\in G$, but 
\[ \ZZ_K/\ZZ[G]\cong C_{120}^2\times C_{60}^2\times C_{12}^4\times C_4^4\times C_2^{14}.\]

Now we consider some simple groups which support Conjecture~C.

\begin{Prop}\hfill
\begin{enumerate}[(i)]
\item Let $G=\PSL(2,q)$ for some prime power $q\ne 1$. Then $\ZZ_{\QQ(G)}=\ZZ[G]$.
\item Let $G = \operatorname{Sz}(q)$ for $q \geq 8$ an odd power of $2$. Then $\mathbb{Z}_{\mathbb{Q}(G)}/ \mathbb{Z}[G] \cong C_2^a$ where $a = \varphi((q^2+1)(q-1))/32$.
\end{enumerate}
\end{Prop}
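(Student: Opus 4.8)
For (i) the plan is to read the field $K=\QQ(G)$ and the generators of $\ZZ[G]$ directly off the character table of $\PSL(2,q)$. Apart from the trivial and Steinberg characters there are the principal and discrete series characters attached to the two cyclic tori, of orders $(q-1)/\gcd(2,q-1)$ and $(q+1)/\gcd(2,q-1)$, whose normalizers are dihedral; by \autoref{navarro} their values on a torus generator $g$ are exactly the traces $\sum_\sigma\sigma(\zeta)=\zeta^j+\zeta^{-j}$, so by \autoref{leopoldt} they already generate $\ZZ_{\QQ(g)}=\ZZ_{\QQ_{|g|}^+}$. If $q$ is even there are no further characters, $K=\QQ_{q-1}^+\QQ_{q+1}^+$ is totally real, and since $\gcd(q-1,q+1)=1$ \autoref{disc2} gives $\ZZ[G]=\ZZ_{\QQ_{q-1}^+}\ZZ_{\QQ_{q+1}^+}=\ZZ_K$. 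If $q$ is odd, the two exceptional characters of degree $(q\pm1)/2$ contribute the Gauss–sum values $\frac12(1\pm\sqrt{q^*})$ with $q^*=(-1)^{(q-1)/2}q\equiv1\pmod4$, which are precisely the ring generators of $\ZZ_{\QQ(\sqrt{q^*})}$. The three relevant discriminants — the two odd conductors $(q\mp1)/2$ and the prime power $q^*$ — are pairwise coprime, so \autoref{disc2} again yields $\ZZ[G]=\ZZ_K$.

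For (ii) I would begin from Suzuki's character table of $G=\Sz(q)$, $q=2^{2m+1}$, $r=2^{m+1}$. The maximal tori are cyclic of pairwise coprime orders $q-1$, $q+r+1$, $q-r+1$ (with $(q+r+1)(q-r+1)=q^2+1$), and by \autoref{navarro} their normalizers induce on them the cyclic Galois groups $C_2$, $C_4$, $C_4$, the involution always acting by inversion. The series characters take on a torus generator the trace of a root of unity over the corresponding fixed field, so by \autoref{leopoldt} they generate $\ZZ_{\QQ_{q-1}^+}$ together with the rings of integers of the two index-$4$ subfields of $\QQ_{q+r+1}$ and $\QQ_{q-r+1}$. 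By coprimality and \autoref{disc2} their compositum $K^+$ is totally real of degree $\frac{\phi(q-1)}{2}\cdot\frac{\phi(q+r+1)}{4}\cdot\frac{\phi(q-r+1)}{4}=\phi((q^2+1)(q-1))/32=a$, and $\ZZ_{K^+}=\ZZ[G]\cap K^+\subseteq\ZZ[G]$.

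The two special characters of degree $\frac r2(q-1)$ are complex conjugate; being class functions and because inversion lies in each acting group, they are forced to be real on every torus, and their only non-real values are $\pm\frac r2\sqrt{-1}$ on the order-$4$ elements. Hence $K=K^+(\sqrt{-1})$, and since $K^+$ has odd conductor while $\disc\QQ_4=-4$, \autoref{disc2} gives $\ZZ_K=\ZZ_{K^+}[\sqrt{-1}]=\ZZ_{K^+}\oplus\sqrt{-1}\,\ZZ_{K^+}$. Setting $J:=\{y\in\ZZ_{K^+}:\sqrt{-1}\,y\in\ZZ[G]\}$, which is an ideal of $\ZZ_{K^+}$ because $\ZZ[G]$ is stable under multiplication by its real part, one obtains $\ZZ_K/\ZZ[G]\cong\ZZ_{K^+}/J$. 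As $2$ is unramified in $K^+$, the quotient $\ZZ_{K^+}/2\ZZ_{K^+}$ is a product of residue fields and hence elementary abelian of rank $a$; so the whole statement reduces to proving $J=2\ZZ_{K^+}$, which yields $\ZZ_K/\ZZ[G]\cong C_2^a$.

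The main obstacle is precisely this identification of $J$. The special-character values on the $2$-singular classes carry the factor $\frac r2=2^m$, so a naive estimate only delivers $2^m\sqrt{-1}\in\ZZ[G]$, i.e. the weaker inclusion $2^m\ZZ_{K^+}\subseteq J$ and a quotient of exponent dividing $2^m$; the delicate point is to exhibit an element of $\ZZ[G]$ equal to $2\sqrt{-1}$ times a unit and simultaneously to rule out $\sqrt{-1}\in\ZZ[G]$, thereby pinning $J$ down to $2\ZZ_{K^+}$ exactly and forcing exponent $2$. I would attack this by feeding in Suzuki's explicit special-character values, reducing modulo each prime of $\ZZ_{K^+}$ lying above $2$, and verifying the two inclusions $2\ZZ_{K^+}\subseteq J\subseteq 2\ZZ_{K^+}$ prime by prime; the coprimality of the three torus orders makes these local computations independent across the three cyclotomic factors of $K^+$, which should keep the bookkeeping under control.
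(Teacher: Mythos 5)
Your part~(i) is correct and is essentially the paper's own proof: split the character values into the families attached to the two tori (and, for odd $q$, the two exceptional characters with values $\tfrac{1}{2}(1\pm\sqrt{q^*})$), check that each family generates the full ring of integers of its field, and glue using the pairwise coprimality of the discriminants (\autoref{disc1}) together with \autoref{disc2}; invoking \autoref{navarro} and \autoref{leopoldt} instead of Dornhoff's explicit tables and \autoref{p2} is an immaterial variation.

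Part~(ii) has a genuine gap, and it sits exactly where you say it does: the identification $J=2\ZZ_{K^+}$ is never proved, only announced as the target of a future local computation. Worse, that identification is \emph{impossible} for $q\ge 32$, so the gap cannot be closed. Write $q=2^{2m+1}$ and $q_0:=2^m=\sqrt{q/2}$, so that your $r/2$ equals $q_0$. You correctly record that every irreducible character of $\Sz(q)$ is real-valued except the two of degree $q_0(q-1)$, whose only non-real values are $\pm q_0\sqrt{-1}$, attained on the order-$4$ classes (these are the values in Suzuki's Theorem~13, and they are forced by the orthogonality relations: given the rational values of these two characters, $\langle W_1,W_1\rangle=1$ and $\langle W_1,W_2\rangle=0$ yield $|W_1(g)|^2=q/2$ for $g$ of order $4$, whence $W_1(g)=\pm q_0\sqrt{-1}$ in $\ZZ[\sqrt{-1}]$). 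Since the real character values generate $\ZZ_{K^+}$ and $(q_0\sqrt{-1})^2\in\ZZ$, the ring generated by all values is exactly $\ZZ[G]=\ZZ_{K^+}\oplus q_0\sqrt{-1}\,\ZZ_{K^+}$, i.e. $J=q_0\ZZ_{K^+}$. In particular $2\sqrt{-1}\in\ZZ[G]$ would force $2/q_0\in\ZZ_{K^+}\cap\QQ=\ZZ$, which holds only for $q=8$; there is no element of $\ZZ[G]$ equal to $2\sqrt{-1}$ times a unit, and ruling out $\sqrt{-1}\in\ZZ[G]$ was never the issue. Carried through correctly, your own reduction proves $\ZZ_{\QQ(G)}/\ZZ[G]\cong C_{q_0}^{a}$.

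This is more than a gap in your write-up: it shows the statement itself, and the paper's proof of it, fail for $q\ge 32$. The paper's argument rests on the claim $\ZZ[W_1]=\ZZ[2\sqrt{-1}]$, which contradicts Suzuki's table once $q_0>2$: the correct statement is $\ZZ[W_1]=\ZZ[q_0\sqrt{-1}]$, so $\ZZ_N/\ZZ[W_1]\cong C_{q_0}$ rather than $C_2$. Hence the proposition as printed is correct only for $q=8$ (where $q_0=2$ and the quotient is $C_2^{9}$), while for general $q$ the method gives $\ZZ_{\QQ(G)}/\ZZ[G]\cong C_{\sqrt{q/2}}^{\,a}$ with $a=\varphi((q^2+1)(q-1))/32$. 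Conjecture~C is unaffected, since $\sqrt{q/2}$ is still a proper divisor of $|G|$, but the remark following the proposition (that minimal simple groups have exponent at most $2$) requires the same correction.
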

\goodbreak
\begin{proof}\hfill
\begin{enumerate}[(i)]
\item Assume first that $q \geq 5$ is odd. Then $G$ has two irreducible characters taking only rational values and three families $\chi_i, \theta_j, \eta_k$ taking (potentially) irrational values (see \cite[Theorem~38.1]{Dornhoff} for instance). 
Let $\zeta_n$ be a primitive $n$-th root of unity and let $\epsilon:=(-1)^{(q-1)/2}$. Set $r:=(q-1)/2$ and $s:=(q+1)/2$. Then the values of the $\chi_i$ lie in $K:=\QQ(\zeta_r+\overline{\zeta_r})$ and they contain the integral basis from \autoref{p2}. 
Similarly the values of the $\theta_j$ generate the ring of integers of $L:=\QQ(\zeta_s+\overline{\zeta_s})$. Finally, the values of the $\eta_k$ generate the ring of integers of $M:=\QQ(\sqrt{\epsilon q})$. 
The discriminants of  $K$, $L$ and $M$ are pairwise coprime by \autoref{disc1}. Hence, by \autoref{disc2} we have 
\begin{align*} 
\mathbb{Z}[G] &= \mathbb{Z}_K\mathbb{Z}_L\mathbb{Z}_M=\ZZ_{KLM}= \mathbb{Z}_{\mathbb{Q}(G)}.
\end{align*}
For $q$ a power of $2$, the result follows for $\PSL(2,q) = \SL(2,q)$ with a similar argument from \cite[Theorem~38.2]{Dornhoff}.

\item The character table of the group $G = \operatorname{Sz}(q)$ was determined by Suzuki in \cite[Theorem~13]{Suzukidouble}. We use the names of characters in that theorem. Set $r := q-1$, $s:= q + \sqrt{2q} + 1$ and $t:= q - \sqrt{2q} + 1$ and note that these odd numbers are pairwise coprime. Observe that $\mathbb{Q}(G) = KLMN$, the composita of the fields $K = \mathbb{Q}(X_1) = \mathbb{Q}(\zeta_r + \bar{\zeta_r})$, $L = \mathbb{Q}(Y_1) = \mathbb{Q}(\zeta_s + \zeta_s^q + \zeta_s^{q^2} + \zeta_s^{q^3})$, $M = \mathbb{Q}(Z_1) = \mathbb{Q}(\zeta_t + \zeta_t^q + \zeta_t^{q^2} + \zeta_t^{q^3})$ and $N = \mathbb{Q}(W_1) = \mathbb{Q}(\sqrt{-1})$, which have pairwise coprime discriminant by \autoref{disc1}. Now $\mathbb{Z}_{K} = \mathbb{Z}[\zeta_r + \bar{\zeta_r}] = \mathbb{Z}[X_1]$ and $\mathbb{Z}_{L} = \mathbb{Z}[\zeta_s + \zeta_s^q + \zeta_s^{q^2} + \zeta_s^{q^3}] = \mathbb{Z}[Y_1]$ and similarly for $Z_1$. Further $\mathbb{Z}[W_1] = \mathbb{Z}[W_2] = \mathbb{Z}[2\sqrt{-1}]$, hence $\mathbb{Z}_N/\mathbb{Z}[W_1]$ has elementary divisors $1$ and $2$. Similar to the remark following \autoref{dirprod}, we can conclude that $\mathbb{Z}_{KLMN}/\mathbb{Z}[G]$ has elementary divisors $1$ and $2$ each with multiplicity 
\[[KLM:\mathbb{Q}] = \frac{\varphi(r)}{2}\frac{\varphi(s)}{4}\frac{\varphi(t)}{4} = \frac{\varphi((q^2+1)(q-1))}{32}.\qedhere\]
\end{enumerate}
\end{proof}

A minimal simple group (i.\,e. a simple group with all proper subgroups solvable) is isomorphic to some $\operatorname{PSL}(2, q)$, to some $\operatorname{Sz}(2^{2f + 1})$ or to $\operatorname{PSL}(3, 3)$. For the last group one can check easily that $\mathbb{Z}_{\mathbb{Q}(G)} = \mathbb{Z}[G]$. Hence, for minimal simple groups $G$, the exponent of $\mathbb{Z}_{\mathbb{Q}(G)}/ \mathbb{Z}[G]$ is at most $2$. 

Finally we compute $\ZZ[G]$ for the alternating group $G=A_n$ of (small) degree $n$. 
Let $g\in G$ be non-rational. Then there exists a partition $\lambda=(\lambda_1,\ldots,\lambda_k)$ of $n$ into pairwise distinct odd parts such that 
\[\ZZ[g]=\ZZ[(1+\sqrt{d})/2]\] 
where $d=(-1)^{(n-k)/2}\lambda_1\ldots\lambda_k\equiv 1\pmod{4}$ (see \cite[Theorem~2.5.13]{JamesKerber} for instance). We may write $\sqrt{d}=e\sqrt{d'}$ such that $d'$ is squarefree. Let $K:=\QQ(g)=\QQ(\sqrt{d})=\QQ(\sqrt{d'})$. Then 
\[\ZZ_K=\ZZ[(1+\sqrt{d'})/2]\] 
and we obtain $e\ZZ_K\subseteq\ZZ[g]$. Note that $e^2\mid d\mid n!=2|G|$. 
Since the discriminant of $K$ is $d'\equiv 1\pmod{2}$, it follows that $|\ZZ_{\QQ(G)}/\ZZ[G]|$ is odd by \autoref{disc2}. It seems fairly difficult to determine the precise structure of $\ZZ_{\QQ(G)}/\ZZ[G]$. 
For $n\ge 25$, a theorem by Robinson--Thompson~\cite{RobThomp} states that 
\[\QQ(G)=\QQ(\sqrt{p^*}:p\text{ odd prime }, n-2\ne p\le n)\]
where $p^*:=(-1)^{\frac{p-1}{2}}p$. By \autoref{disc2}, $\ZZ_{\QQ(G)}$ is generated as abelian group by all products of the elements $(1+\sqrt{p^*})/2$ with $p$ as above.
The following table lists the (non-trivial) elementary divisors of $\ZZ_{\QQ(G)}/\ZZ[G]$ for $n\le 31$. In every case Conjecture~C is fulfilled.

\[
\begin{array}{ll}
n&\ZZ_{\QQ(A_n)}/\ZZ[A_n]\\\hline
\le 11&1\\
12,13,14&3^4\\
15&3^4\times {15}^4\times {45}^4\\
16&3^4\times {15}^4\\
17&3^{12}\times 9^4\times {45}^4\times {135}^4\\
18&3^8\times {15}^8\times {45}^8\\
19&3^8\times {15}^8\\
20&3^{36}\times 9^{12}\times {45}^{32}\times {10395}^{28}\times {31185}^4\\
21&3^{36}\times {105}^4\times {315}^{12}\\
22&3^{52}\times {105}^8\times {315}^{52}\times {945}^4\\
23&3^{64}\times {4095}^{32}\\
24&1\\
25&3^{32}\times {15}^{32}\times {315}^{32}\\
26&3^{38}\times {15}^{40}\times {45}^{40}\times {315}^{56}\times {945}^8\\
27&3^{112}\times 9^{112}\times 27^{16}\\
28&3^{96}\times 15^{80}\times 45^{48}\\
29&3^{224}\times 15^{128}\\
30&3^{128}\times 105^{128}\\
31&3^{256}
\end{array}
\]

\section*{Acknowledgment}
The work on this paper started with a visit of the first author at the University of Jena in January 2019. He appreciates the hospitality received there. The authors also like to thank Thomas Breuer for making them aware of the CoReLG package~\cite{Corelg} of GAP~\cite{GAP48} which was used for computations with alternating groups. The first author is a postdoctoral researcher of the FWO (Research Foundation Flanders).
The second author is supported by the German Research Foundation (\mbox{SA 2864/1-1} and \mbox{SA 2864/3-1}).

\end{document}